\DeclareMathAlphabet{\mathpzc}{OT1}{pzc}{m}{it}
\DeclareMathOperator{\ucat}{ucat}
\def\BState{\State\hskip-\ALG@thistlm} \makeatother
\patchcmd{\@maketitle}{\begin{center}}{\begin{flushleft}}{}{}
\patchcmd{\@maketitle}{\begin{tabular}[t]{c}}{\begin{tabular}[t]{@{}l}}{}{}
\patchcmd{\@maketitle}{\end{center}}{\end{flushleft}}{}{}
\newcommand\be{\begin{equation}}
\newcommand\ee{\end{equation}}
\newtheorem{thm}{Theorem}[section]
\newtheorem{lem}[thm]{Lemma}
\newtheorem{prp}[thm]{Proposition}
\newtheorem{cor}[thm]{Corollary}
\newtheorem{dfn}[thm]{Definition}
\newtheorem{rem}[thm]{Remark}
\newcommand{\N}{\mathbb{N}}
\newcommand{\supp}{\text{supp}}
\DeclarePairedDelimiterX\set[1]\lbrace\rbrace{#1}
\newcommand{\R}{\ensuremath{\mathbb{R}}}
\DeclareMathOperator{\cone}{cone}
\DeclareMathOperator{\chr}{chr}
\def\BState{\State\hskip-\ALG@thistlm} \makeatother
\patchcmd{\@maketitle}{\begin{center}}{\begin{flushleft}}{}{}
\patchcmd{\@maketitle}{\begin{tabular}[t]{c}}{\begin{tabular}[t]{@{}l}}{}{}
\patchcmd{\@maketitle}{\end{center}}{\end{flushleft}}{}{}
\begin{document}

\author{Mishal Assif P K$^1$, Yuliy Baryshnikov$^{1,2,3}$ }
\date{%
    $^1$University of Illinois, Department of ECE\\%
    $^2$University of Illinois, Department of Mathematics\\
    $^3$Kyushu university, IMI\\[2ex]%
    \today
}

\title{Minimal Unimodal Decomposition is NP-Hard on Graphs}


\maketitle

\abstract{
     A function on a topological space is called unimodal if all of its super-level sets are contractible. A minimal unimodal decomposition of a function $f$ is the smallest number of unimodal functions that sum up to $f$. The problem of decomposing a given density function into its minimal unimodal components is fundamental in topological statistics. We show that finding a minimal unimodal decomposition of an edge-linear function on a graph is NP-hard. Given any $k \geq 2$, we establish the NP-hardness of finding a unimodal decomposition consisting of $k$ unimodal functions. We also extend the NP-hardness result to related variants of the problem, including restriction to planar graphs, inapproximability results, and generalizations to higher dimensions.}

\section{Introduction}
\label{sec:intro}

A central theme in statistics and data analysis is the separation of signal from noise. Data is often modeled as noisy observations concentrated around underlying signals, idealized as points or structures in an abstract space where the measurements reside. This conception gives rise to a diffuse distribution of observed values around these signal centers. A classical and widely used model in this context is the Gaussian mixture model, where each component models a signal, and the noise is represented by the spread of the Gaussian.

While powerful, the class of Gaussian mixtures is intrinsically limited: it depends on finitely many parameters and is rigid in its geometric assumptions. To overcome these limitations, one may consider broader classes of functions such as log-concave or quasi-concave functions, where the superlevel sets $\{f \geq c\}$ are convex or empty. These generalizations preserve certain structural features that are useful in analysis and optimization. However, they remain fundamentally tied to the linear structure of the underlying space, also making their extension to manifolds or more general topological spaces challenging.

\subsection{Categories and Coordinate-Free Generalizations}

A more flexible approach to the notion of signal decomposition, especially in non-Euclidean settings, is to adopt coordinate-free analogs grounded in topology. Instead of convexity, one can consider \textit{contractibility} of superlevel sets: a very general topological property that retains the essence of being "simple" or "non-fragmented" while eschewing dependence on linear or metric structure. This leads to a conceptual shift where statistical modes are defined not through convexity or parametric form (like Gaussians), but via topological simplicity.

This shift mirrors a classical correspondence between geometry and topology. In computational geometry, for instance, one encounters the problem of covering a polygonal domain $D \subset \mathbb{R}^n$ with the smallest number of convex sets, an optimization problem whose solution offers a measure of the geometric complexity of $D$. The natural topological analog is to cover a topological space $X$ with the smallest number of contractible subsets. The minimal such number is known as the \emph{geometric category} of $X$, a homeomorphism invariant (see e.g. \cite{james_category_1978}). Several other notions of topological decomposition have emerged from this perspective. The most prominent is the \emph{Lusternik–Schnirelmann category}, which uses subsets that are null-homotopic in $X$. Other related concepts include the \emph{sectional category} (also known as the Schwarz genus) of a fibration, and the more recent notion of \emph{topological complexity}, which arises in the study of path planning in robotics (see \cite{Far03}). These invariants attempt to quantify how topologically intricate a space is by measuring how simply it can be covered by simple (contractible) parts.


This paper focuses on a weighted, statistical version of these ideas, introduced in \cite{baryshnikov_unimodal_2011}. Given a density function $f: X \to [0, \infty)$ on a topological space $X$, we ask: what is the minimal number of \emph{modes} required to write $f$ as a sum of unimodal functions, each supported on a contractible subset? This quantity serves as a topological analog of the number of components in a Gaussian mixture. 

By moving away from rigid geometric prototypes and toward a topological definition of unimodality (based on the contractibility of superlevel sets), we obtain a more adaptable framework. This allows us to define and study decompositions in spaces where traditional convexity-based or parametric approaches are not applicable. The resulting theory is robust to the underlying topology and thus better suited to modern applications in topological data analysis.

In this work, we study the computational complexity of such decompositions. Specifically, we show that the problem of computing $\ucat^p(f)$ (see definitions in the next section), the minimal number of topological modes required to represent $f$, is NP-hard in a variety of settings.

\subsection{Unimodality}
This paper concerns the coordinate-free topological version of the problem of minimal decompositions of densities over a space, a statistical analogue of the category of a domain. The resulting invariant is called the unimodal category \cite{baryshnikov_unimodal_2011}.

\begin{dfn}[Unimodal Function]
A function $f : X \rightarrow [0, \infty)$ on the topological space $X$ is unimodal if the superlevel sets $\{ f \geq c \}$ is contractible or empty for all $c > 0$.
\end{dfn}

\begin{dfn}[Unimodal Category]
For any $1 <= p < \infty$ and function $f : X \rightarrow [0, \infty)$, the unimodal category of $f$, denoted  $\ucat^p(f)$, is the smallest number $k$ such that there exist unimodal functions $f_1, f_2, \ldots, f_{k}$ with
\[
	\sum_{i=1}^{k} f_i^p(x) = f^p(x),\mbox{ for all } x \in X.
\] 
Similarly, for $p = \infty$, $\ucat^{\infty}(f)$ is the smallest number $k$ such that there is a set of unimodal functions $f_1, f_2, \ldots, f_{k}$ with
\[
	\max_{i=1}^{k} f_i(x) = f(x),\mbox{ for all }x \in X.
\]
\end{dfn}

A set of $\ucat^p(f)$ unimodal functions that sum up to $f$ is called a minimal unimodal decomposition. It should be noted that minimal unimodal decompositions need not be unique. 

\begin{dfn}[Strong Unimodal Category]
For any $1\leq p < \infty$ and function $f : X \rightarrow [0, \infty)$, the strong unimodal category of $f$ $\ucat_s^p(f)$ is the smallest number $k$ such that there exists unimodal functions $f_1, f_2, \ldots, f_{\ucat_s^p(f)}$ with
\[
	\sum_{i=1}^{k} f_i^p(x) = f^p(x),\mbox{ for all } x \in X.
\] 
and the intersection of any collection of superlevel sets of the functions $\bigcap\limits_{l=1}^{m} \{f_{i_l} > c_l\}$
is contractible or empty.
\end{dfn}
This definition aims to reflect the property of convex sets to be closed under intersections, and to ensure that the standard tools, like the {\em Nerve Lemma}, work for our setting.

\begin{rem}
For $1 < p < \infty$, $\ucat^p(f) = \ucat^1(f^p)$ and $\ucat_s^p(f) = \ucat_s^1(f^p)$.
\end{rem}

\subsection{Functions on Graphs}
We will denote by $G=(V,E)$ a graph with vertices $V$ and edges $E$, containing no self loops or multiple edges. We think of $G$ as a geometric simplicial complex, so that every point $e$ on an edge $(v_0, v_1)$  can be
endowed with a barycentric coordinate $c \in (0,1)$ such that $e = cv_0 + (1-c)v_1$. A function $f : G \rightarrow [0,\infty)$ is edge linear if 
\[
f(cv_0 + (1-c)v_1) = cf(v_0) + (1-c)f(v_1)
\]
 for all edges $(v_0,v_1) \in E$. We will deal almost exclusively with edge linear functions in this article. This does not lead to much loss in generality. If $f$ is a continuous function on the graph with finitely many critical points, we can add new vertices at each critical point of $f$ to the graph. All the critical points of $f$ are vertices in the resulting augmented graph, and so $f$ is monotonic along each edge of the new graph. If $f$ is monotonic along each edge of the graph, we can simply keep the values of $f$ at the vertices of the graph and modify the coordinates along each edge so that the function is edge linear. Then $\ucat^p(f)$ is invariant under this modification, as shown in \cite{Bar20}.

 \subsection{Outline of Results}

We list the key results of the article here. Let $G = (V, E)$ be a graph and $f$ an edge-linear function on $G$, unless otherwise mentioned.

\begin{enumerate}
    \item For any $p \in \mathbb{N} \cup \{\infty\}$ and any fixed $k >= 2$, it is NP-hard to determine whether $\ucat^p(f) \leq k$ (see Theorem \ref{th:main}).
    \item For any $p \in \mathbb{N}$, it is NP-hard to determine $\ucat^p(f)$ or $\ucat_s^p(f)$ (see Theorem \ref{th:main2}), even if the graph $G$ is restricted to being planar and having maximum degree less than 3 (see Corollary \ref{cor:cor1}). 
    \item For any $p \in \mathbb{N}$, it is NP-hard to approximate $\ucat^p(f)$ or $\ucat_s^p(f)$ by a factor strictly less than $\sqrt{2}$ (see Corollary \ref{cor:cor2}).
    \item For piecewise linear functions $f$ on two dimensional simplicial complexes  homeomorphic to the unit square in $\mathbb{R}^2$, it is NP-hard to determine  $\ucat^p(f)$ or $\ucat_s^p(f)$ for any $p \in \mathbb{N}$ (see Theorem \ref{thm:highd}).
\end{enumerate}

Essentially, when the underlying space is a 1 dimensional complex (a graph), the minimal unimodal decomposition problem can be solved in polynomial time only if the space is contractible (a tree). However, if the underlying space is a complex of dimension at least 2, the minimal unimodal decomposition problem cannot be solved in polynomial time, even if the space is contractible.

\section{Minimal Unimodal Decomposition on Trees}

The problem of determining minimal unimodal decompositions for functions on trees was solved in \cite{Bar20}. We summarize some results of \cite{Bar20} that are relevant to this article in this section. Unimodal functions on trees can be characterized by the following theorem.

\begin{thm}
\label{char-unimod-tree}
Let $G$ be a tree and $f : G \rightarrow [0,\infty)$ be a edge linear unimodal function on $G$. If $v_{max}$ is a vertex at which $f$ is maximized and the tree is rooted at $v_{max}$, then $f$ is non-increasing away from the root to its leaves.
\end{thm}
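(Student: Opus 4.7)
The plan is to establish the theorem by contradiction, leveraging the requirement that every superlevel set of a unimodal function be contractible. Suppose the conclusion fails: along some root-to-leaf path there is a strict increase. Since $f$ is edge-linear, its extrema along each edge are attained at the endpoints, so there must be an edge $(u,w)$ of the rooted tree $G$, with $u$ the parent of $w$, such that $f(u) < f(w)$. I would then pick any threshold $c$ with $f(u) < c \leq f(w)$ and examine the superlevel set $S_c = \{x \in G : f(x) \geq c\}$.

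The key observations are as follows. The set $S_c$ contains $v_{max}$, because $f(v_{max}) \geq f(w) \geq c$, and it contains $w$, but it does not contain $u$. On the edge $(u,w)$, edge-linearity ensures that $\{f \geq c\}$ restricted to this edge is precisely the closed sub-segment $[p, w]$, where $p$ is the unique point on the edge satisfying $f(p) = c$; in particular an entire sub-arc of the edge adjacent to $u$ lies outside $S_c$. The central step is to show $S_c$ is disconnected. Because $G$ is a tree, $u$ is a cut point: removing $u$ from $G$ (viewed as a topological space) separates $w$ from $v_{max}$, since all simple paths in a tree are unique. Consequently any continuous path in $G$ from $v_{max}$ to $w$ must pass through $u$, and since $u \notin S_c$, no path in $S_c$ can connect $v_{max}$ to $w$.

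Thus $v_{max}$ and $w$ lie in different connected components of $S_c$, making $S_c$ a non-empty disconnected space, which cannot be contractible. This contradicts the assumption that $f$ is unimodal. The only real subtlety is the disconnection argument, which is resolved by the tree structure: the unique-path property of trees immediately shows that the internal vertex $u$, excluded from $S_c$, is a cut point separating $v_{max}$ from $w$. Everything else is routine bookkeeping about edge-linearity and the intermediate value theorem on a single edge.
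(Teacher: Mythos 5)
Your proof is correct. The paper does not include its own argument for this theorem (it is quoted from the reference on unimodal decompositions of trees), but your contradiction via a disconnected superlevel set --- choosing $c$ with $f(u) < c \leq f(w)$, noting $c>0$ is automatic since $f\geq 0$, and using that the parent $u$ is a cut point separating $w$ from $v_{max}$ in the tree --- is exactly the standard and intended argument, and all the steps check out.
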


Computing $\ucat^1(f)$ can then be done in polynomial time using the following greedy algorithm. 

\begin{enumerate}
    \item Find a vertex $v$ of $G$ where $f$ attains its maximum value;
    \item Orient the edges of $G$ away from $v$ so that $G$ is a directed tree rooted at $v$;
    \item Define a function $h_{f,v} : G \rightarrow [0, \infty)$ such that $h_{f,v} = f(v)$;
    \item Define $h_{f,v}(w)$ for all other vertices $w$ inductively using the following rule. For each oriented edge $u \longrightarrow w$, find $h_{f,v}(u)$ and then set
    \[
        h_{f,v}(w) = \begin{cases}
            h_{f,v}(u),& \text{ if $f(w) > f(u)$} \\
            \max(h_{f,v}(u)-(f(u)-f(w)), 0), & \text{ otherwise }
        \end{cases};
    \]
    \item Compute the remainder function $R_v f = f - h_{f,v}$. Its support is a subforest in $G$;
    \item Repeat the above steps with components of $R_v f$ until $R_v f$ becomes the zero function.
\end{enumerate}

The list of functions $h_{f,v}$ generated by repeating these steps gives a minimal unimodal decomposition of $f$ in $O(\ucat^1(f) V) \leq O(V^2)$ time. 

\begin{thm}[Corollary 6.1 \cite{Bar20}]
Let $G=(V,E)$ be a tree and $f : G \rightarrow [0,\infty)$ be an edge linear function. A minimal unimodal decomposition of $f$ can be computed in $O(|V|^2)$ time.
\end{thm}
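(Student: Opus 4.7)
The plan is to establish the two ingredients implicit in the statement: (a) the greedy algorithm described above returns a valid unimodal decomposition of $f$ with exactly $\ucat^1(f)$ summands, and (b) it runs in time $O(|V|^2)$. The structural fact driving the whole argument is that $h_{f,v}$ is itself an edge-linear unimodal function on $G$ that is dominated by $f$, so the residual $R_v f = f - h_{f,v}$ is a non-negative edge-linear function supported on a subforest of $G$, and the algorithm can be applied recursively to each component.

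First I would check, by induction on the BFS distance from $v$, that $0 \le h_{f,v}(w) \le f(w)$ at every vertex $w$: the base case $h_{f,v}(v)=f(v)$ is immediate, and for an oriented edge $u\to w$ both branches of the defining rule preserve the inequalities. This gives $R_v f \ge 0$. By construction, $h_{f,v}(w) \le h_{f,v}(u)$ along every oriented edge, so $h_{f,v}$ is non-increasing away from $v$; by Theorem~\ref{char-unimod-tree} this is exactly the characterization of unimodality on a tree, so $h_{f,v}$ is unimodal. Moreover, the rule is designed so that $h_{f,v}$ agrees with $f$ on a neighborhood of $v$, which implies $R_v f$ vanishes on that neighborhood and its support is a disjoint union of subtrees strictly smaller than $G$; iterating the construction on these components therefore terminates.

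The delicate part is minimality, which I would reduce to the lemma: if $f$ is not unimodal, then $\ucat^1(R_v f) = \ucat^1(f) - 1$. One direction is immediate because $f = h_{f,v} + R_v f$ and $h_{f,v}$ is unimodal, so $\ucat^1(f) \le \ucat^1(R_v f)+1$. The reverse inequality $\ucat^1(R_v f) \le \ucat^1(f)-1$ is where the work lies: given any minimal decomposition $f = g_1 + \cdots + g_k$, I would argue that the greedy choice $h_{f,v}$ is pointwise the largest unimodal function on $G$ dominated by $f$ and maximized at $v$, so one can redistribute the summands (along each directed edge from $v$, absorb mass into whichever $g_i$ reaches its maximum at $v$) to rewrite the decomposition as $f = h_{f,v} + g_1' + \cdots + g_{k-1}'$ with each $g_i'$ still unimodal, giving the desired decomposition of $R_v f$ into $k-1$ unimodal pieces. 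This maximality-plus-redistribution step is the main technical obstacle and is exactly the content of the argument in \cite{Bar20}. Given this lemma, induction on $\ucat^1(f)$ shows the algorithm outputs a minimal decomposition.

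Finally, for the runtime, each iteration consists of (i) finding a maximum of the current residual, (ii) one traversal to propagate the recurrence and compute $h_{f,v}$, and (iii) one traversal to compute $R_v f$ and extract its connected components. Each of these is linear in the size of the current support and hence $O(|V|)$. Since the algorithm terminates after $\ucat^1(f) \le |V|$ iterations, the total cost is $O(\ucat^1(f)\cdot |V|) \le O(|V|^2)$, as claimed.
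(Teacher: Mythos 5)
Your overall architecture matches the paper's: the paper does not prove this theorem either, but simply describes the greedy sweep $h_{f,v}$ and cites Corollary~6.1 of \cite{Bar20}, so the real content is exactly the minimality lemma $\ucat^1(R_vf)=\ucat^1(f)-1$ that you isolate. Your verifications of the easy parts are correct: the induction giving $0\le h_{f,v}\le f$, the monotonicity of $h_{f,v}$ away from $v$ (hence unimodality via Theorem~\ref{char-unimod-tree}), the fact that $h_{f,v}=f$ on the closed star of $v$ (since $v$ is a global maximum, every neighbor $w$ gets $h_{f,v}(w)=f(v)-(f(v)-f(w))=f(w)$), and the $O(\ucat^1(f)\cdot|V|)\le O(|V|^2)$ accounting, which is precisely the bound the paper states.

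The gap is in the one sentence that is supposed to carry the minimality argument: the claim that $h_{f,v}$ is ``pointwise the largest unimodal function on $G$ dominated by $f$ and maximized at $v$'' is false. Take the path $v_1-v_2-v_3-v_4$ with $f=(3,1,2,1)$ and $v=v_1$. The recurrence gives $h_{f,v}=(3,1,1,0)$, since along $v_3\to v_4$ the value drops by $f(v_3)-f(v_4)=1$. But $g=(3,1,1,1)$ is edge-linearly dominated by $f$, non-increasing away from $v_1$, hence unimodal with mode at $v_1$, and $g(v_4)=1>0=h_{f,v}(v_4)$. The sweep $h_{f,v}$ deliberately loses height every time $f$ decreases, without recovering it when $f$ rises again, so it is generally strictly smaller than the pointwise-maximal dominated unimodal function $w\mapsto\min_{u\in[v,w]}f(u)$. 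Consequently the ``absorb mass into whichever $g_i$ is maximized at $v$'' redistribution cannot be justified by maximality; the actual exchange argument in \cite{Bar20} is a different and more delicate induction (and note also that it is not automatic that \emph{some} summand of a minimal decomposition attains its mode at $v$ --- that too requires proof). Since you defer this step to the reference anyway, your write-up is no less complete than the paper's own treatment, but the heuristic you offer in its place is not a correct description of why the greedy choice works and could not be turned into a proof as stated.
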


In addition, $\ucat_s^1(f) = \ucat^1(f)$ on trees, and the above algorithm is guaranteed to produce strongly unimodal components. Since $\ucat^p(f) = \ucat^1(f^p)$ for any $1 < p ,\infty$, they can also be computed with the same time complexity on trees. Determining $\ucat^{\infty}(f)$ on trees is a trivial problem; it is simply the number of local maxima of the function $f$.  

\section{Minimal Unimodal Decomposition on General Graphs}

The greedy algorithm for determining minimal unimodal decompositions on trees uses the fact that the global maxima of any function $f$ is guaranteed to be a mode in some minimal unimodal decomposition of it. This is not necessarily true in the case of a graph with cycles.

However, we can characterize unimodal functions on general graphs using the following obvious theorem.
\begin{thm}
\label{thm:unimod-char}
Let $f : G \rightarrow [0,\infty)$ be a edge linear function on $G$. $f$ is unimodal if and only if the subgraph induced by $\supp(f) = \{ v \in V \ | \ f(v) > 0 \}$ is a tree and the restriction of $f$ to $G(\supp(f))$ satisfies the conditions given in Theorem \ref{char-unimod-tree}.
\end{thm}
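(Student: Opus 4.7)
The strategy would be to reduce the characterization to the tree case of Theorem~\ref{char-unimod-tree} via a homotopy equivalence stripping away the ``tails'' of $f$ on edges that reach into zero vertices. Concretely, for any $c > 0$ and any edge $(v,w) \in E$ with $v \in \supp(f)$ and $w \notin \supp(f)$, I would consider the interval on this edge running from $v$ up to the unique point where $f$ takes the value $c$; call these intervals the tails. The superlevel set $\{f \ge c\} \subset G$ then decomposes as $\{f|_{G(\supp(f))} \ge c\}$ together with the disjoint union of tails, each attached at a single vertex of $\supp(f)$.

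For the forward direction, I would assume $f$ is unimodal and not identically zero. Since each tail is an interval glued at one point, $\{f \ge c\}$ deformation retracts onto $\{f|_{G(\supp(f))} \ge c\}$ for every $c > 0$, so these restricted superlevel sets are contractible or empty whenever the original ones are. Choosing $c$ smaller than the minimum positive vertex value of $f$ makes $G(\supp(f))$ itself homotopy equivalent to $\{f \ge c\}$ and hence contractible; a contractible finite graph is a tree. With $G(\supp(f))$ a tree and $f|_{G(\supp(f))}$ unimodal, Theorem~\ref{char-unimod-tree} delivers the required non-increasing-from-max condition.

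For the reverse direction, I would assume $G(\supp(f))$ is a tree on which $f$ is non-increasing from some maximum vertex $v_{\max}$. For any $c > 0$ and any point $x$ in the tree with $f(x) \ge c$, the unique tree path from $v_{\max}$ to $x$ lies entirely in the superlevel set by monotonicity, so $\{f|_{G(\supp(f))} \ge c\}$ is a connected subtree and therefore contractible. Reattaching the tails to reconstruct $\{f \ge c\} \subset G$ preserves contractibility, since each tail is an interval glued at a single point. Hence every superlevel set of $f$ on $G$ is contractible or empty, so $f$ is unimodal.

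The only step demanding genuine care is the deformation retract argument linking superlevel sets of $f$ on $G$ to those of its restriction to the support subgraph (in particular, checking that each tail really is attached at a single point and is nonempty exactly when its supp-vertex lies above level $c$). Once that bookkeeping is in place, both directions reduce immediately to Theorem~\ref{char-unimod-tree} applied on the tree $G(\supp(f))$.
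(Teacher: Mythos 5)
Your proof is correct; the paper itself offers no argument for this statement (it is introduced as an ``obvious theorem''), and your tail-stripping deformation retract supplies exactly the details being elided. The bookkeeping you flag does go through: every nonempty tail at level $c$ is attached at a support vertex $v$ with $f(v)\geq c$, so the retraction never changes emptiness or contractibility, and edges between two zero vertices contribute nothing since $f$ vanishes identically there. One small point worth making explicit: Theorem~\ref{char-unimod-tree} as stated gives only the forward implication (unimodal $\Rightarrow$ non-increasing from the root), so your reverse direction correctly does not cite it but instead proves the converse directly via the path-connectivity of superlevel sets in the tree --- that step is needed and your argument for it is sound.
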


We now state the main result of this section. 

\begin{thm}
\label{th:main}
Let $G = (V,E)$ be a graph and $f : G \rightarrow [0,\infty)$ be an edge-linear function on $G$. For any $p \in \N \cup \{\infty\}$, the problem of determining whether $\ucat^p(f) \leq k$ is
\begin{itemize}
	\item solvable in $O(|V|)$ time if $k = 1$,
	\item NP-hard if $k \geq 2$.
\end{itemize} 
\end{thm}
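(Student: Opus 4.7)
The case $k = 1$ is the question of whether $f$ is itself unimodal, which by Theorem~\ref{thm:unimod-char} reduces to checking two structural conditions: (a) the induced subgraph on $\supp(f)$ is a tree, and (b) $f$ is non-increasing along every edge directed away from a vertex achieving $\max f$. To test these in $O(|V|)$ time I would identify a maximizing vertex $v^*$ by a single scan, then run a BFS/DFS from $v^*$ restricted to the support, verifying concurrently that no back-edge is encountered (acyclicity of the support) and that each traversed edge $(u,w)$ satisfies $f(u) \geq f(w)$. A single traversal closes both checks.

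For the hard case $k \geq 2$, I would construct a polynomial-time many-one reduction from a standard NP-complete problem, with 3-SAT (or, in view of the later planar corollary, planar monotone 3-SAT) or graph 3-coloring being the most natural sources. The plan is to concentrate first on $k = 2$: given an instance $\phi$, build a graph $G_\phi$ with an edge-linear function $f_\phi$ so that $\ucat^p(f_\phi) \leq 2$ iff $\phi$ is satisfiable. The building blocks are \emph{variable gadgets} (small cyclic subgraphs whose edge-linear function admits essentially two ways to split into a two-term unimodal sum, interpreted as true/false) and \emph{clause gadgets} that couple the three literal choices so that a global 2-term decomposition exists exactly when some literal in each clause is placed in the satisfying position. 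The constraint of Theorem~\ref{thm:unimod-char}, that each unimodal summand has a support inducing a subtree, is the combinatorial hook: the cycles in the variable gadgets force any component to break them into paths, and the choice of which edge to "cut" plays the role of the Boolean variable.

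To lift NP-hardness from $k = 2$ to arbitrary $k \geq 2$, I would append $k - 2$ disjoint "padding modules" to $G_\phi$, each designed to require exactly one additional unimodal component (for instance, small cycles carrying a suitable edge-linear function). Since the support of a unimodal function is contractible and hence lies in a single connected component of the underlying graph, $\ucat^p$ is additive under disjoint unions, and the decision threshold shifts cleanly from $2$ to $k$. To cover all $p \in \N$ I would invoke the identity $\ucat^p(f) = \ucat^1(f^p)$ noted in the remark and apply the $p = 1$ reduction to $f^{1/p}$. For $p = \infty$, I would verify that the same gadgets still enforce the intended structural constraints under max-aggregation, modifying values slightly if needed.

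The main obstacle will be \emph{rigidity} of the gadgets: since edge-linear unimodal functions on graphs can spread widely along a subtree support with an arbitrary monotone profile, one must rule out unintended "cheating" decompositions that could realize $\ucat^p \leq k$ when $\phi$ is unsatisfiable. Pinning down a tight correspondence between two-term decompositions of $f_\phi$ and satisfying assignments of $\phi$ will demand a careful balancing of the prescribed vertex values in the gadgets and a detailed case analysis of how mode-vertices, their support subtrees, and their monotone heights can interact across adjacent gadgets. This is where most of the technical work is concentrated; the rest of the argument (the $k = 1$ algorithm, the padding, and the passage between $p$'s) is essentially bookkeeping.
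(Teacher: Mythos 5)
Your $k=1$ algorithm matches the paper's (modulo one small omission: a BFS/DFS from the maximizing vertex certifies acyclicity via back-edges, but you must also confirm that \emph{every} vertex of $\supp(f)$ was reached, since a disconnected support also violates Theorem~\ref{thm:unimod-char}). The real problem is the $k\geq 2$ case: what you have written is a reduction \emph{strategy}, not a reduction. The entire burden of the theorem lies in exhibiting concrete variable and clause gadgets and proving the two directions of the equivalence ``$\phi$ satisfiable $\iff \ucat^p(f_\phi)\leq 2$'' — and you explicitly defer exactly this (``this is where most of the technical work is concentrated''). The soundness direction is genuinely delicate: a unimodal summand may have as its support an arbitrary induced subtree spanning many gadgets, with an arbitrary monotone height profile, so ruling out cheating decompositions is not a routine verification. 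Until the gadgets are specified and that case analysis is done, there is no proof.

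For comparison, the paper sidesteps gadget engineering entirely by choosing reductions whose correctness reduces to clean structural characterizations of unimodal supports (Theorem~\ref{thm:unimod-char}). For $k\geq 3$ it reduces from Graph-$k$-coloring: adjoin a clique of $k$ apex vertices joined to all of $V$, take $f$ to be the constant function $\mathbbm{1}$, and show $k$-colorability is equivalent to $\ucat^p\leq k$ (Lemmas~\ref{l:p1} and~\ref{l:p2}); the apex vertices force each unimodal support to behave like a color class. For $k=2$ it proves that $\ucat^p(\mathbbm{1})\leq 2$ holds iff $V$ partitions into two subsets each inducing a tree (Proposition~\ref{prp:k2p}), and that this partition problem is NP-hard via Hamiltonicity of planar graphs (Proposition~\ref{prp:dec}). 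Note also that the paper does not need your padding step, since each $k$ gets its own reduction; your padding idea (disjoint modules each costing one component, using additivity of $\ucat^p$ over connected components) is sound in principle, but it rests on the unproven $k=2$ base case. If you want to salvage your outline, the most economical fix is to replace the SAT gadgets with the constant function~$\mathbbm{1}$ and prove the tree-partition characterization; that is where the theorem actually lives.
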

The proof is split into three parts, addressing the cases $k=1$, $k \geq 3$ and $k = 2$.

\begin{proof}[Proof of Theorem \ref{th:main} when $k = 1$]
When $k = 1$, the problem reduces to determining whether $f$ is unimodal or not. By Theorem \ref{thm:unimod-char}, we first need to check if $\supp(f)$ is a tree. This can be done in $O(|V|)$ time, since the $\supp(f)$ can be found in $O(|V|)$. Then if the number of edges between them not equal to $|\supp(f)|-1$, we can return no immediately. Otherwise we find the maximum vertex in $\supp(f)$, and check that the function is non-increasing away from the maximum vertex in $O(|V|)$ time using depth first search.
\end{proof}

\begin{proof}[Proof of Theorem \ref{th:main} when $k \geq 3$]
For $k \geq 3$, the problem can be reduced in polynomial time to the Graph-$k$-coloring which is known to be NP-hard \cite{Gar79}. Let us form a new graph $\tilde{G}=(V\cup W, \tilde{E})$ by adding $k$ vertices $W = \{w_1, w_2, \ldots, w_k\}$ to $V$, an edge between each each pair of vertices in $W \times V$, and edges between each distinct pair of vertices in $W \times W$. We now take $\tilde{\mathbbm{1}} : \tilde{G} \rightarrow [0,\infty)$ to be the constant function taking value $1$ at all vertices and edges of $G$. Observe that $\ucat^p(\tilde{\mathbbm{1}}) = \ucat^1(\tilde{\mathbbm{1}})$ for any $1 < p < \infty$, so we can restrict our attention to $p \in \{1, \infty\}$. We show that $G$ can be $k$-colored if and only if $\ucat^p(\tilde{\mathbbm{1}}) \leq k$. This is established in the following Lemmas \ref{l:p1} and \ref{l:p2} which completes the proof for $k \geq 3$.
\end{proof}

\begin{lem}
\label{l:p1}
If $G$ can be $k$-colored, then $\ucat^1(\tilde{\mathbbm{1}}) \leq k$. 
\end{lem}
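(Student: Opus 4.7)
The plan is to construct $k$ explicit unimodal edge-linear functions $f_1,\dots,f_k$ on $\tilde G$ that sum to $\tilde{\mathbbm 1}$, by using the color classes of a proper $k$-coloring of $G$. Fix a proper coloring $c\colon V\to\{1,\dots,k\}$ and let $V_i=c^{-1}(i)$. For each $i\in\{1,\dots,k\}$, define $f_i$ on the vertices of $\tilde G$ by setting $f_i(w_i)=1$, $f_i(v)=1$ for every $v\in V_i$, and $f_i=0$ at all other vertices. Extend $f_i$ linearly along each edge of $\tilde G$.

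The first step is to verify $\sum_{i=1}^k f_i\equiv 1$. It suffices to check this at every vertex and then invoke edge-linearity on both sides. At $w_i$ only $f_i$ is $1$; at a vertex $v\in V_i$ only $f_i$ is $1$; so the sum is $1$ at every vertex of $\tilde G$. Since each $f_i$ and $\tilde{\mathbbm 1}$ are edge-linear, the identity holds on every edge as well.

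The main step — and the one that requires care — is showing each $f_i$ is unimodal. I would analyze the superlevel set $\{f_i\ge c\}$ directly. The crucial observation is that since $c$ is a proper coloring, $V_i$ is an independent set in $G$, so in $\tilde G$ the only edges with both endpoints in $\{w_i\}\cup V_i$ are the edges $(w_i,v)$ for $v\in V_i$, which together form a star $S_i$ centered at $w_i$. For $c=1$ the superlevel set is exactly $S_i$, which is contractible. For $c\in(0,1)$ the superlevel set is $S_i$ together with a collection of half-open ``whiskers'' emanating from its vertices: from $w_i$ along each edge to $V\setminus V_i$ or to $W\setminus\{w_i\}$, and from each $v\in V_i$ along each incident edge in $G$ (necessarily going into $V\setminus V_i$) and along each edge $(v,w_j)$ with $j\ne i$. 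These whiskers lie on distinct edges and terminate strictly before reaching any other vertex of $S_i$, so they never reconnect; the set deformation retracts onto $S_i$ and is therefore contractible. For $c>1$ the set is empty. Hence every superlevel set is contractible or empty.

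I expect the main obstacle to be precisely the whisker argument: one needs to rule out the possibility that two whiskers meet, which is exactly where the independence of $V_i$ in $G$ (and the absence of internal edges among $V_i$ in $\tilde G$) is used. Once this is settled, each $f_i$ is unimodal, and since $\{f_i\}_{i=1}^k$ form a sum-decomposition of $\tilde{\mathbbm 1}$, we conclude $\ucat^1(\tilde{\mathbbm 1})\le k$.
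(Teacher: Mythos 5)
Your proof is correct and uses the same decomposition as the paper: one function per color class, equal to $1$ on $V_i\cup\{w_i\}$ and $0$ elsewhere, extended edge-linearly. The only difference is that you verify unimodality by analyzing the superlevel sets directly (star plus whiskers), whereas the paper invokes its characterization of unimodal edge-linear functions (support induces a tree, non-increasing from the root), with the independence of $V_i$ playing the same acyclicity role in both arguments.
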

\begin{proof}
Let us suppose $G$ can be $k$-colored and let $V_i$ denote the subset of $V$ with color $i$ for $1 \leq i \leq k$. If we define $\tilde{V}_i := V_i \cup \{w_i\}$, the subgraph $\tilde{G}(\tilde{V}_i)$ is a tree for each $i$, which follows from these two facts:
\begin{itemize}
	\item $\tilde{G}(\tilde{V}_i)$ is connected since each vertex is connected to $w_i$,
	\item $\tilde{G}(\tilde{V}_i)$ cannot have cycles, since the vertices coming from $V_i$ cannot have any edges between each other, by definition of colorability. 
\end{itemize}
Let $\tilde{\mathbbm{1}}_i$ be the edge linear function taking value $1$ at vertices in $\tilde{V}_i$ and $0$ otherwise. By Theorem \ref{thm:unimod-char}, each $\tilde{\mathbbm{1}}_i$ is unimodal and $\sum_{i=1}^{k} \tilde{\mathbbm{1}}_i = \tilde{f}$, which means $\ucat^1(\tilde{\mathbbm{1}}) \leq k$. Similarly, let
$\tilde{\mathbbm{1}}_i$ be a function taking value $1$ at vertices in $\tilde{V}_i$ and $0$ elsewhere. Along an edge $(v_0, v_1)$, we define $\tilde{\mathbbm{1}}_i$ in the following two cases:
\begin{itemize}
\item If $\tilde{\mathbbm{1}}_i(v_0) = \tilde{\mathbbm{1}}_i(v_1)$, then $\tilde{\mathbbm{1}}_i(e) = \tilde{\mathbbm{1}}_i(v_0)$ for any point $e$ on the edge,
\item If $\tilde{\mathbbm{1}}_i(v_0) = 0$ and $\tilde{\mathbbm{1}}_i(v_1) = 1$, set $\tilde{\mathbbm{1}}_i(e) = 1$ for any point $e$ between $v_1$ and the midpoint of $v_0$ and $v_1$. Extend $\tilde{\mathbbm{1}}_i$ linearly between $v_0$ and the midpoint of $v_0$ and $v_1$.
\end{itemize}
Then each $\tilde{\mathbbm{1}}_i$ is unimodal and $\max_{i=1}^{k} \tilde{\mathbbm{1}}_i = \tilde{\mathbbm{1}}$ which means $\ucat^{\infty}(\tilde{\mathbbm{1}}) \leq k$. Therefore, if $G$ can be $k$-colored, the $\ucat^1(\tilde{\mathbbm{1}}) \leq k$ and $\ucat^{\infty}(\tilde{\mathbbm{1}}) \leq k$.
\end{proof}

\begin{lem}
\label{l:p2}
$\ucat^1(\tilde{\mathbbm{1}}) \leq k$, then $G$ can be $k$-colored.
\end{lem}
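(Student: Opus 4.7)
The plan is to build a proper $k$-coloring of $G$ directly from a unimodal decomposition. Let $f_1, \ldots, f_k$ be unimodal functions with $\sum_i f_i = \tilde{\mathbbm{1}}$ (padding with zeros if fewer than $k$ components appear), and set $S_i = \supp(f_i)$. By Theorem \ref{thm:unimod-char} each $\tilde{G}[S_i]$ is a tree. Since $W$ is a $k$-clique in $\tilde{G}$ and every $w_j$ is joined to every $v \in V$, three structural facts follow: (a) $|S_i \cap W| \le 2$, because three vertices of $W$ form a triangle; (b) if $|S_i \cap W| = 2$ then $S_i \subseteq W$, because any $v \in V$ together with two $w$'s would form a triangle; and (c) if $|S_i \cap W| \ge 1$ then $S_i \cap V$ is independent in $G$, because any edge inside $S_i \cap V$ together with a $w_j \in S_i$ would form a triangle.

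The key counting step comes next. Letting $a$, $b$, $c$ denote the number of indices $i$ with $|S_i \cap W|$ equal to $0$, $1$, $2$ respectively, one has $a + b + c = k$ and, since each $w_j$ is in at least one support, $b + 2c = \sum_i |S_i \cap W| \ge k$. Subtracting the two relations gives $c \ge a$, and hence $b + 2a = k + (a - c) \le k$. This inequality is the slack that makes the color budget work out.

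To construct the coloring, for each $v \in V$ set $\pi(v) = \min\{i : v \in S_i\}$; by (b), $S_{\pi(v)}$ satisfies $|S_{\pi(v)} \cap W| \in \{0,1\}$. I reserve one fresh color $\beta_i$ for each index $i$ with $|S_i \cap W| = 1$, and for each index $i$ with $S_i \subseteq V$ I fix a proper $2$-coloring of the tree $G[S_i] = \tilde{G}[S_i]$ using two fresh colors $\tau_i^0, \tau_i^1$. Define $\chi(v) = \beta_{\pi(v)}$ in the first case, and $\chi(v)$ equal to the assigned tree-color $\tau_{\pi(v)}^{\bullet}$ of $v$ in the second case. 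The total palette has size $b + 2a \le k$.

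Properness of $\chi$ reduces to a short case analysis on an edge $(v, v') \in E$: if $\pi(v) = \pi(v') = i$ and $|S_i \cap W| = 1$, then $v, v' \in S_i \cap V$ contradicts (c); if $\pi(v) = \pi(v') = i$ and $S_i \subseteq V$, the edge lies in the tree $G[S_i]$ and is separated by its $2$-coloring; and if $\pi(v) \ne \pi(v')$ the two colors lie in disjoint fresh families by construction (the "first index" rule rules out the case that $v'$ also lies in $S_{\pi(v)}$). The main obstacle I anticipated is that a support $S_i$ with $S_i \subseteq V$ can contain an edge of $G$ and so cannot, by itself, furnish a single color class as in Lemma \ref{l:p1}; the counting step $c \ge a$ is what resolves this, since each such troublesome support is paid for by a support lying entirely inside the clique $W$, freeing exactly the extra color needed to $2$-color the offending tree.
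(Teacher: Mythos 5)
Your proof is correct and follows essentially the same route as the paper's: classify the supports $S_i$ by $|S_i\cap W|\in\{0,1,2\}$, use the fact that the $k$-clique $W$ must be covered to get the counting inequality ($c\ge a$, equivalently $2a+b\le k$ in the paper's notation $2n_0+n_1\le k$), and then spend one color on each independent set and two on each tree. The only differences are cosmetic (the $\min$-index rule in place of the paper's ``remove $v$ from all but one subset'' step).
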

\begin{proof}
Assume $\ucat^1(\tilde{\mathbbm{1}}) \leq k$ or $\ucat^{\infty}(\tilde{\mathbbm{1}}) \leq k$. Let us take $\tilde{V}_i := \supp(\tilde{\mathbbm{1}}_i)$ for $1 \leq i \leq k$, where $\tilde{\mathbbm{1}}_i$ is the $i$-th unimodal component. Note that $\tilde{V}_i$ cover the vertices of $\tilde{G}$, but they need not be disjoint. Then $\tilde{G}(\tilde{V}_i)$ are trees by \ref{thm:unimod-char}. Each $\tilde{V}_i$ can contain at most two vertices from $W$, since three vertices from $W$ would lead to a cycle. The subsets $\tilde{V}_i$ can then be split into three groups:
\begin{itemize}[align=left, leftmargin=*]
\item[\textit{Subsets 1:}] $\tilde{V}_1, \ldots, \tilde{V}_{n_0}$ containing no vertices from $W$. This means that each $\tilde{V}_i$ contains only vertices from $V$ and the subgraph induced by these vertices in the original graph $G$ is a tree.
\item[\textit{Subsets 2:}] $\tilde{V}_{n_0+1}, \ldots, \tilde{V}_{n_0+n_1}$ containing $1$ vertex from $W$. Let $v_0, v_1$ be two distinct vertices from the original graph $G$ that belong to one such $\tilde{V}_i$. These vertices should not contain an edge between them in $G$, as that would lead to a cycle in $\tilde{G}(\tilde{V}_i)$ as both $v_0, v_1$ are also connected to the $1$ vertex from $W$.
\item[\textit{Subsets 3:}]  $\tilde{V}_{n_0+n_1+1}, \ldots, \tilde{V}_{n_0+n_1+n_2}$ containing $2$ vertices from $W$. These subsets cannot contain any other vertices, since any other vertex from $V$ is connected to both the vertices from $W$ leading to a cycle.
\end{itemize}
The $n_1$ subsets of type 2 containing exactly one vertex from $W$ each can cover at most $n_1$ vertices from $W$. The remaining vertices from $W$, which are at least $k-n_1$ in number, must be covered by the $n_2$ subsets containing exactly two vertices from $W$. This means $n_2 \geq \frac{k-n_1}{2}$ and since $n_0+n_1+n_2 = k$, $n_0 \leq  \frac{k-n_1}{2}$. We can now define $V_i = V \cap \tilde{V}_i$. The last $n_2$ such subsets of type 3 has to be empty which means $\{ V_i \}_{i=1}^{n_0+n_1}$ cover $V$, as $\tilde{V}_i$ cover $V \cup W$. However, $V_i$ need not be disjoint, and we make them disjoint by making the following modification. For any vertex $v$ in $V$, remove $v$ from all but one subset $V_i$. If we do this, we end up with two kinds of subsets $V_i$:
\begin{enumerate}
\item $V_1, \ldots, V_{n_0}$ such that $G(V_i)$ is a forest for each $i$. This is true since we potentially removed some vertices from a tree, which can only lead to a forest.
\item $V_{n_0+1}, \ldots, V_{n_0+n_1}$ such that no two vertices in a subset $V_i$ are adjacent.
\end{enumerate}
$n_1$ distinct colors can be assigned to the latter $n_1$ subsets. Any forest can be colored with two colors, and so we can assign two colors each to the former $n_0$ forests. Since $n_0 \leq \frac{k-n_1}{2}$, we need at most $k-n_1$ colors to do this, which means $G$ is $k$-colorable. 
\end{proof}

The proof for the case $k=2$ follows from the two propositions below.

\begin{prp}
\label{prp:k2p}
Let $G = (V,E)$ be a graph and $
\mathbbm{1}: G \rightarrow [0,\infty)$ be the constant function taking value $1$ at all points in $G$. Then 
\begin{itemize}
\item $\ucat^1(\mathbbm{1}) \leq 2$ if and only if $V$ can be split into two disjoint subsets $V_1, V_2$ such that $V_1 \cup V_2 = V$ and the subgraphs $G(V_1)$ and $G(V_2)$ are trees,
\item $\ucat^{\infty}(\mathbbm{1}) \leq 2$ if and only if $V$ can be split into two subsets $V_1, V_2$ such that $V_1 \cup V_2 = V$ and the subgraphs $G(V_1)$ and $G(V_2)$ are trees.
\end{itemize}
\end{prp}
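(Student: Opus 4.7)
The plan is to prove the four implications (forward and backward for each of the two equivalences), following the template of Lemmas \ref{l:p1}--\ref{l:p2} but specialised to the original graph $G$ rather than the augmented $\tilde G$. The backward directions construct unimodal components explicitly from the given tree partition or cover; the forward directions extract the required vertex subsets from a given decomposition, via a generic threshold in the $L^1$ case and via vertex supports in the $L^\infty$ case.

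For the backward direction of the $L^1$ statement, let $V = V_1 \sqcup V_2$ with each $G(V_i)$ a tree, and define $f_i$ as the edge-linear function taking value $1$ on $V_i$ and $0$ on $V_{3-i}$. Then $f_1+f_2 = \mathbbm{1}$ at every vertex, hence everywhere by edge-linearity. For $c \in (0,1]$ the superlevel set $\{f_i \geq c\}$ is $G(V_i)$ together with closed half-edges reaching into each $V_i$--$V_{3-i}$ edge; it deformation retracts onto the tree $G(V_i)$, so $f_i$ is unimodal. For the backward direction of the $L^\infty$ statement, I would use the construction in Lemma \ref{l:p1}: $f_i$ equals $1$ on $V_i$ and on the half of each $V_i$--$V_{3-i}$ edge adjacent to $V_i$, and is linear down to $0$ on the remaining half. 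The superlevel-set analysis is identical, and $\max(f_1,f_2) = \mathbbm{1}$ is immediate from the construction.

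For the forward direction of $L^1$, assume $f_1 + f_2 = \mathbbm{1}$ with $f_1, f_2$ edge-linear (by the standard reduction) and unimodal. Pick $c \in (0,1)$ avoiding the finite set $\{f_1(v) : v \in V\}$ and set $V_1 := \{v \in V : f_1(v) > c\}$ and $V_2 := \{v \in V : f_1(v) < c\}$, which partition $V$ by construction. By edge-linearity every edge of $G(V_1)$ has $f_1 > c$ throughout, so $\{f_1 \geq c\}$ is exactly $G(V_1)$ with half-edges extending into the $V_1$--$V_2$ edges from the $V_1$ side; this deformation retracts onto $G(V_1)$, which must therefore be contractible (since unimodality makes the superlevel set contractible), i.e.\ a tree. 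Applying the same argument at threshold $1-c$ to $f_2 = 1 - f_1$ gives $G(V_2)$ a tree. For the forward direction of $L^\infty$, set $V_i := \{v \in V : f_i(v) > 0\}$; since $\max(f_1(v),f_2(v)) = 1 > 0$ at every vertex we have $V_1 \cup V_2 = V$, and Theorem \ref{thm:unimod-char} applied to each edge-linear unimodal $f_i$ gives $G(V_i)$ a tree, with no need for disjointness.

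The main obstacle is the edge-linearity hypothesis on the components in the forward directions: without it, an edge between two $V_1$ vertices could dip below $c$ in its interior, spoiling the deformation retraction of $\{f_1 \geq c\}$ onto $G(V_1)$, and the support characterisation underpinning the $L^\infty$ forward argument would likewise fail. This is handled by the standard edge-linearization reduction (from \cite{Bar20} and used throughout the paper), which replaces the components by the edge-linear extensions of their vertex values while preserving the sum and max relations.
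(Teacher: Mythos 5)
Your backward constructions and your $L^\infty$ forward direction coincide with the paper's proof. The $L^1$ forward direction is where you genuinely diverge: the paper takes $V_i=\supp(\mathbbm{1}_i)$, which cover $V$ but may overlap, and then reassigns the overlap (setting $\tilde{\mathbbm{1}}_1=1$, $\tilde{\mathbbm{1}}_2=0$ there), checking that the modified supports are still trees because $\supp(\tilde{\mathbbm{1}}_2)$ becomes a superlevel set $\{\mathbbm{1}_2>1-\epsilon\}$ of the original unimodal component. Your generic-threshold slicing $V_1=\{f_1>c\}$, $V_2=\{f_1<c\}=\{f_2>1-c\}$ produces a disjoint partition in one step and reduces both halves to contractibility of honest superlevel sets; it is arguably cleaner since it avoids verifying unimodality of modified functions. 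Both arguments rest on the same fact, that a contractible superlevel set deformation retracts onto the induced subgraph on its vertex set.

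The weak point is your final paragraph. You are right that edge-linearity of the \emph{components} is not part of the definition and must be addressed (the paper silently assumes it when it applies Theorem \ref{thm:unimod-char} to the $f_i$), but the fix you propose does not work as stated. Edge-linearizing the components from their vertex values need not preserve unimodality: on a triangle with $f_1\equiv 1$ at all vertices but dipping to $0$ at the midpoint of one edge, and $f_2=1-f_1$, both components are unimodal and sum to $\mathbbm{1}$, yet the edge-linearization of $f_1$ is the constant $1$ on the triangle, which is not unimodal. For $p=\infty$ the situation is worse: edge-linearization does not even preserve the relation $\max(f_1,f_2)=\mathbbm{1}$ on edge interiors (two linear functions crossing at $1/2$), which is precisely why the paper's own constructions in Lemma \ref{l:p1} use plateau functions rather than edge-linear ones. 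So either adopt the convention that components are taken edge-linear (as the paper implicitly does), or give a direct argument that a disjoint/covering tree partition of $V$ can still be extracted from non-monotone components; the reduction you cite from \cite{Bar20} concerns reparametrizing the target function, not the components of a decomposition.
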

\begin{proof}
Consider that case of $\ucat^1(\mathbbm{1})$. If $V$ can be split into two disjoint subsets $V_1, V_2$ such that $V_1 \cup V_2 = V$ and the subgraphs $G(V_1)$ and $G(V_2)$ are trees, we can define $\mathbbm{1}_i$ to be $1$ on the vertices $V_i$, $0$ otherwise, and extend to $G$ via edge-linearity. The functions are unimodal since $G(V_i)$ are trees and $\mathbbm{1}_1+\mathbbm{1}_2 = \mathbbm{1}$ on each vertex. Hence, $\ucat^1(\mathbbm{1}) \leq 2$.

If $\ucat^1(\mathbbm{1}) \leq 2$, let $\mathbbm{1}_1, \mathbbm{1}_2$ be unimodal functions summing to $\mathbbm{1}$. Let $V_i$ be $\supp(\mathbbm{1}_i)$, which means $G(V_i)$ are trees by Theorem \ref{thm:unimod-char}. If the supports of $\mathbbm{1}_1$ and $\mathbbm{1}_2$ are disjoint, we are done. Otherwise, modify $\mathbbm{1}_1, \mathbbm{1}_2$ at all vertices $v \in V_1\cap V_2$ such that $\tilde{\mathbbm{1}}_1(v) = 1$ and $\tilde{\mathbbm{1}}_2(v) = 0$ on $V_1\cap V_2$ . The resulting $\tilde{\mathbbm{1}}_1$ will be unimodal since $\tilde{\mathbbm{1}}_1(v) = 1$ on all vertices in its support, and $\supp(\tilde{\mathbbm{1}}_1) = \supp(\mathbbm{1}_1)$ which means $G(\supp(\tilde{\mathbbm{1}}_1))$ is a tree. $\tilde{\mathbbm{1}}_2$ will also be unimodal since $\tilde{\mathbbm{1}}_2(v) = 1$ on all vertices in its support, and $\supp(\tilde{\mathbbm{1}}_2) = \{ v \ | \ \mathbbm{1}_2(v) > 1-\epsilon \}$ for small enough $\epsilon$, which means $G(\supp(\tilde{\mathbbm{1}}_2)$ is also a tree, as $\mathbbm{1}_2$ is unimodal. Setting $V_i$ to $\supp(\tilde{\mathbbm{1}}_i)$ proves the proposition.

Now consider $\ucat^{\infty}(\mathbbm{1})$. If $V$ can be split into two subsets $V_1, V_2$ such that $V_1 \cup V_2 = V$ and the subgraphs $G(V_1)$ and $G(V_2)$ are trees, we can define $\mathbbm{1}_i$ to be $1$ on the vertices $V_i$, $0$ otherwise. We extend $\mathbbm{1}_i$ along an edge $(v_0, v_1)$ in the following two cases as in the earlier proof:
\begin{itemize}
\item If $\tilde{\mathbbm{1}}_i(v_0) = \tilde{\mathbbm{1}}_i(v_1)$, then $\tilde{\mathbbm{1}}_i(e) = \tilde{\mathbbm{1}}_i(v_0)$ for any point $e$ on the edge,
\item If $\tilde{\mathbbm{1}}_i(v_0) = 0$ and $\tilde{\mathbbm{1}}_i(v_1) = 1$, set $\tilde{\mathbbm{1}}_i(e) = 1$ for any point $e$ between $v_1$ and the midpoint of $v_0$ and $v_1$. Extend $\tilde{\mathbbm{1}}_i$ linearly between $v_0$ and the midpoint of $v_0$ and $v_1$.
\end{itemize}
 The functions are unimodal since $G(V_i)$ are trees and $\max(\tilde{\mathbbm{1}_1}, \tilde{\mathbbm{1}_2}) = 1$ on each vertex. Hence, $\ucat^{\infty}(\mathbbm{1}) \leq 2$.
 
If $\ucat^{\infty}(\mathbbm{1}) \leq 2$, let $\mathbbm{1}_1$ and $\mathbbm{1}_2$ be the unimodal components and let $V_i = \supp(\mathbbm{1}_i)$. By Theorem \ref{thm:unimod-char}, $G(V_i)$ are trees and they clearly cover $V$, which proves the result.
\end{proof}

\begin{prp}
\label{prp:dec}
If $G=(V,E)$ is a graph, the following two decision problems are NP-hard:
\begin{itemize}
\item Can $V$ be split into two disjoint subsets $V_1, V_2$ such that $V_1 \cup V_2 = V$ and the subgraphs $G(V_1)$ and $G(V_2)$ are trees ?
\item Can $V$ be split into two subsets $V_1, V_2$ such that $V_1 \cup V_2 = V$ and the subgraphs $G(V_1)$ and $G(V_2)$ are trees ?
\end{itemize}
\end{prp}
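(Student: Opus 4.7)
The plan is to prove both decision problems NP-hard by a polynomial-time reduction from a classical NP-complete problem; the not-all-equal version of 3-SAT (NAE-3-SAT) is a natural choice, because ruling out monochromatic constraints dovetails with the ``no cycle inside a side'' requirement once the right gadgets are in place. Both problems clearly lie in NP: given a candidate $(V_1, V_2)$, checking that each induced subgraph $G(V_i)$ is a tree amounts to verifying connectedness and the edge-count identity $|E(G(V_i))| = |V_i| - 1$, both doable in linear time.

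For the reduction, given an NAE-3-SAT instance $\phi$ with variables $x_1, \ldots, x_n$ and clauses $C_1, \ldots, C_m$, I would construct a graph $G_\phi$ by combining variable and clause gadgets with two global anchors. The variable gadget for $x_i$ contains two distinguished literal vertices $t_i$ and $f_i$ and is designed so that any valid tree-partition forces $t_i$ and $f_i$ onto opposite sides; the side containing $t_i$ then encodes the truth value of $x_i$. The clause gadget for $C_j$ is wired into the three literal vertices of $C_j$ and admits a valid tree-partition only when these three vertices are not all on the same side. Two global anchor vertices $r_1, r_2$, one per intended side, serve to splice the local gadget pieces into a single tree on each side without introducing cycles. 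A satisfying NAE assignment then yields a tree-partition by placing gadget components on the two sides as dictated by the assignment and using the anchors to merge the local forests into two global trees; conversely, any valid tree-partition of $G_\phi$ determines a NAE-satisfying assignment by reading off which side each $t_i$ lies in. To derive NP-hardness of the covering variant from the same construction, the gadgets should be engineered so that in any cover of $V$ by two induced trees, every overlap vertex is a leaf in at least one of the two trees; dropping it from that side turns the cover into a partition without destroying the tree property, forcing the two decision problems to have the same answer on $G_\phi$.

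The main obstacle is the gadget design itself. Unlike the $k$-coloring reduction of Lemmas \ref{l:p1} and \ref{l:p2}, where edges alone suffice to forbid monochromatic pairs, here each side must be simultaneously connected and acyclic. This means gadgets have to rule out cycles of \emph{all} lengths inside a single side while still permitting the anchor-merged construction to remain connected on each side, a pair of competing requirements that must be carefully balanced through the local structure of both variable and clause gadgets. A secondary difficulty is ruling out degenerate tree-partitions in which a clause gadget accidentally admits a monochromatic assignment by routing its connectivity through the other side; this is typically handled by attaching each clause gadget to both anchors in a rigid way so that the only remaining degrees of freedom are the NAE choices of the literals.
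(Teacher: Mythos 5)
Your proposal is a plan rather than a proof: the entire technical content of a gadget reduction from NAE-3-SAT is the construction and verification of the gadgets, and you explicitly defer exactly that part (``the main obstacle is the gadget design itself''). None of the forcing properties you rely on are established: that the variable gadget forces $t_i$ and $f_i$ to opposite sides in \emph{every} valid tree-partition, that the clause gadget admits a valid partition precisely when its three literals are not all on one side, that the two anchors can merge all the local pieces into two globally connected \emph{acyclic} subgraphs (many clause gadgets sharing the same anchor and the same literal vertices is exactly the kind of configuration that creates cycles within a side), and that in the covering variant every overlap vertex can be dropped from one side. The last claim is particularly delicate: in the paper the two decision problems are genuinely different in general (they correspond to Hamiltonian cycles versus Hamiltonian paths in a dual graph), so arranging for them to coincide on $G_\phi$ is an extra constraint you would have to engineer and verify, not a property you can assume. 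As written, the argument could not be checked or completed without doing essentially all of the work from scratch.

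For comparison, the paper's proof avoids gadgets entirely. It uses the classical duality for planar graphs: the vertex set of a planar (triangulated) graph can be partitioned into two disjoint subsets each inducing a tree if and only if the dual graph has a Hamiltonian cycle, and can be covered by two such subsets if and only if the dual has a Hamiltonian path. NP-hardness then follows immediately from the known NP-hardness of Hamiltonian cycle and Hamiltonian path on planar graphs. If you want to pursue a SAT-based route you would need to supply and verify explicit gadgets; otherwise the duality argument is both shorter and already gives hardness on planar instances, which the paper later exploits (Corollary \ref{cor:cor1}).
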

\begin{proof}
Let $G$ be a planar graph. The vertices $V$ can be split into two disjoint subsets such that each induced subgraph is a tree if and only if the dual graph is Hamiltonian. It is NP-hard to determine whether a planar graph is Hamiltonian \cite{Gar76}, which proves the first statement. 

The vertices $V$ can be split into two possibly overlapping subsets such that each induced subgraph is a tree if and only if the dual graph has a Hamiltonian path. It is NP-hard to determine if a planar graph has a Hamiltonian path \cite{Gar74} which proves the second
statement.
\end{proof}

\begin{proof}[Proof of Theorem \ref{th:main} when $k = 2$]
The result follows as a consequence of Propositions \ref{prp:k2p} and \ref{prp:dec}.
\end{proof}

The value of $\ucat^{\infty}(\mathbbm{1})$ is equal to the minimum number of open contractible sets that form a cover of the graph, also known as the geometric category of $G$ \cite{james_category_1978}. The supports of any unimodal decomposition of $\mathbbm{1}$ form an open contractible cover of $G$. On the other hand, given an open contractible cover of $G$, one can use bump functions taking value $1$ on each set in the cover to form a unimodal decomposition of the constant function.

\begin{cor}
\label{cor:cat}
The problem of determining whether a graph $G$ can be covered by $k$ open contractible sets is NP-hard for $k \geq 2$. 
\end{cor}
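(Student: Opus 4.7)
The plan is to derive the corollary directly from Theorem \ref{th:main}, by formalizing the observation already made in the paragraph preceding the corollary: for any graph $G$, the quantity $\ucat^{\infty}(\mathbbm{1})$ coincides with the minimum number of open contractible sets required to cover $G$. Once this equivalence is established, the NP-hardness of the decision problem $\ucat^{\infty}(\mathbbm{1}) \leq k$ proved in Theorem \ref{th:main} (whose reductions already employ the constant function $\mathbbm{1}$ on auxiliary graphs) transfers verbatim to the open contractible cover decision problem.

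The first direction I would prove is: if $\ucat^{\infty}(\mathbbm{1}) \leq k$, then $G$ has an open contractible cover of size $k$. Given unimodal $\mathbbm{1}_1, \ldots, \mathbbm{1}_k$ with $\max_i \mathbbm{1}_i \equiv \mathbbm{1}$, set $U_i := \{x \in G : \mathbbm{1}_i(x) > 0\}$. Continuity gives openness; the identity $\max_i \mathbbm{1}_i = 1$ gives the covering property; and $U_i = \bigcup_{c > 0} \{\mathbbm{1}_i \geq c\}$ is an ascending union of contractible superlevel sets which, on a finite graph, deformation retracts onto $\{\mathbbm{1}_i \geq c\}$ for sufficiently small $c > 0$, hence is itself contractible.

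For the converse, given an open contractible cover $U_1, \ldots, U_k$ of $G$, I would construct edge-linear bump functions $\phi_i : G \to [0,1]$ such that each $\phi_i$ is unimodal, $\supp(\phi_i) \subseteq U_i$, and $\max_i \phi_i \equiv 1$. Since $G$ is a compact one-dimensional complex and each $U_i$ is an open connected contractible subset of $G$ (hence a subtree possibly carrying open half-edges at its boundary), I would invoke a shrinking-lemma argument to pick closed subtrees $K_i \subset U_i$ with $\bigcup_i K_i = G$, then set $\phi_i = 1$ on $K_i$ and decay linearly to $0$ along the half-edges joining $K_i$ to $\partial U_i$. The superlevel sets $\{\phi_i \geq c\}$ for $c \in (0,1]$ are then closed subtrees of $U_i$, so $\phi_i$ is unimodal by Theorem \ref{thm:unimod-char}, and because the plateaus $K_i$ cover $G$ the pointwise maximum equals $1$ everywhere.

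The main obstacle is precisely this subtree-shrinking construction: one must ensure that the closed plateaus $K_i$ remain contractible inside the possibly irregularly shaped open sets $U_i$ while still covering every point of $G$. For a finite graph this is routine once the Lebesgue number of the cover is used to bound the widths of the collars removed from each $U_i$, but it is the only nontrivial step in the argument. Combining the two polynomial-time translations with the NP-hardness of $\ucat^{\infty}(\mathbbm{1}) \leq k$ for $k \geq 2$ from Theorem \ref{th:main} then yields the corollary.
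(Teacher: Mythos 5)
Your proposal is correct and follows essentially the same route as the paper: the paper likewise derives the corollary from Theorem \ref{th:main} via the identification of $\ucat^{\infty}(\mathbbm{1})$ with the minimal open contractible cover (geometric category), taking supports of unimodal components in one direction and bump functions subordinate to the cover in the other. You simply supply the shrinking/plateau details that the paper leaves implicit.
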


\section{Strongly Unimodal Decomposition on General Graphs}
 
 In this section, we consider the problem of finding the minimal strongly unimodal decomposition of a general graph. 

 
 \begin{thm}
 \label{th:main2}
 Let $G = (V,E)$ be a graph and $f : G \rightarrow [0,\infty)$ be a function on $G$. For any $p \in \mathbb{N}$, the problem of determining $\ucat_s^p(f)$ is NP-hard.
 \end{thm}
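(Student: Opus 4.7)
The plan is to adapt the reductions from Theorem \ref{th:main} to the strong unimodal category. Since $\ucat_s^p(f) \geq \ucat^p(f)$ trivially, any lower bound coming from a combinatorial obstruction (as in Lemma \ref{l:p2} or Proposition \ref{prp:k2p}) applies automatically to $\ucat_s^p$. Using the identity $\ucat_s^p(f) = \ucat_s^1(f^p)$ stated in the preceding remark, it suffices to prove NP-hardness for $p = 1$. I would then parallel the reductions of Theorem \ref{th:main}, from graph $k$-coloring (for $k \geq 3$) and from the Hamiltonicity-type problems of Proposition \ref{prp:dec} (for $k = 2$), constructing a suitable edge-linear function on an augmented graph. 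The ``lower bound'' half of each reduction (combinatorial infeasibility $\Rightarrow \ucat_s^p > k$) is then free via $\ucat_s^p \geq \ucat^p$.

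The main obstacle is the ``upper bound'' half: given a combinatorial solution, one must construct a decomposition that is \emph{strongly} unimodal, not merely unimodal. The naive construction of Lemma \ref{l:p1}, in which the component $\tilde{\mathbbm{1}}_i$ has vertex support $\tilde{V}_i = V_i \cup \{w_i\}$, is generally not strongly unimodal: two supports $\tilde{V}_i$ and $\tilde{V}_j$ are connected by many edges of $\tilde{G}$ (through the universal vertices, and through original edges of $G$ between $V_i$ and $V_j$), so the intersection $\{\tilde{\mathbbm{1}}_i > c_1\} \cap \{\tilde{\mathbbm{1}}_j > c_2\}$ becomes a disjoint union of small open intervals, one per connecting edge, which is not contractible whenever more than one such edge exists.

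To overcome this, I would modify either the augmentation or the edge-linear extensions of the components so that the supports of any two components overlap on at most one interval, ensuring that each pairwise intersection of superlevel sets is a single interval (hence contractible) or empty. A natural way to achieve this is to use a sparser augmentation in which the gadget forcing each color class is attached via a single edge, together with a careful choice of the edge-linear extension on the remaining ``crossing'' edges so that on each such edge exactly one of the two potentially-active components is truncated to zero before its support would meet that of another component. Under any such modification, intersections of three or more components' superlevel sets are automatically empty, since by construction at most two components are simultaneously nonzero on any given edge, and the verification in the converse direction still relies on Lemma \ref{l:p2} via $\ucat_s^p \geq \ucat^p$. Combined with the free lower bound, this yields NP-hardness of deciding $\ucat_s^p(f) \leq k$ for every fixed $k \geq 2$, and hence NP-hardness of computing $\ucat_s^p(f)$.
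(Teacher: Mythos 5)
Your approach has a genuine and fatal gap: a reduction that fixes $k$ cannot work for the \emph{strong} unimodal category, and the paper's proof of Theorem \ref{th:main2} is structured around exactly this obstruction. The supports $\{f_i>0\}$ of a strongly unimodal decomposition form a good cover of $\supp(f)$, so by the Nerve Lemma the nerve --- a simplicial complex on $k$ vertices, hence with first Betti number at most $\binom{k-1}{2}$ --- is homotopy equivalent to $\supp(f)$. Consequently, for fixed $k$, any graph whose relevant support has first Betti number exceeding $\binom{k-1}{2}$ trivially has $\ucat_s^p(f)>k$. This kills both of your intended gadgets: the coloring gadget $\tilde G$ of Theorem \ref{th:main} is essentially a join with a $k$-clique and has enormous $\beta_1$, so no ``sparser augmentation'' or clever edge-linear truncation can produce a strongly unimodal decomposition into $k$ pieces; and for $k=2$ the situation is even starker --- the paper's Theorem \ref{th:diff-s} proves that deciding $\ucat_s^p(f)\le 2$ is solvable in \emph{polynomial time}, precisely because any cyclic graph needs more than two sets in a good cover. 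So the statement you claim at the end (``NP-hardness of deciding $\ucat_s^p(f)\le k$ for every fixed $k\ge 2$'') is actually false for $k=2$, and your proposed constructions cannot establish it for any fixed $k$. Your observation that the lower-bound direction is free via $\ucat_s^p\ge\ucat^p$ is correct but irrelevant once the upper-bound direction is unachievable.

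The paper instead reduces from Vertex Cover with \emph{unbounded} $k$, on graphs of girth greater than $4$: it subdivides each edge of $G$ by a midpoint vertex to form $\tilde G$ (which preserves the homotopy type of $G$, so the nerve obstruction scales with $k$), sets $\tilde f(v)=\deg(v)$ on original vertices and $1$ on midpoints, and shows that minimum vertex cover equals $\ucat^1(\tilde f)$, with the girth condition guaranteeing that the star-shaped components built from a vertex cover have pairwise and higher-order superlevel-set intersections that are single points or empty, hence strongly unimodal. The converse direction is a delicate counting argument at non-mode vertices (Lemma \ref{lem:umd}), not an appeal to Lemma \ref{l:p2}. If you want to salvage your write-up, you must abandon the fixed-$k$ framing entirely and reduce from a problem whose hardness does not rely on a fixed target value.
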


\begin{figure}
    \centering
    \includegraphics[width=0.9\linewidth]{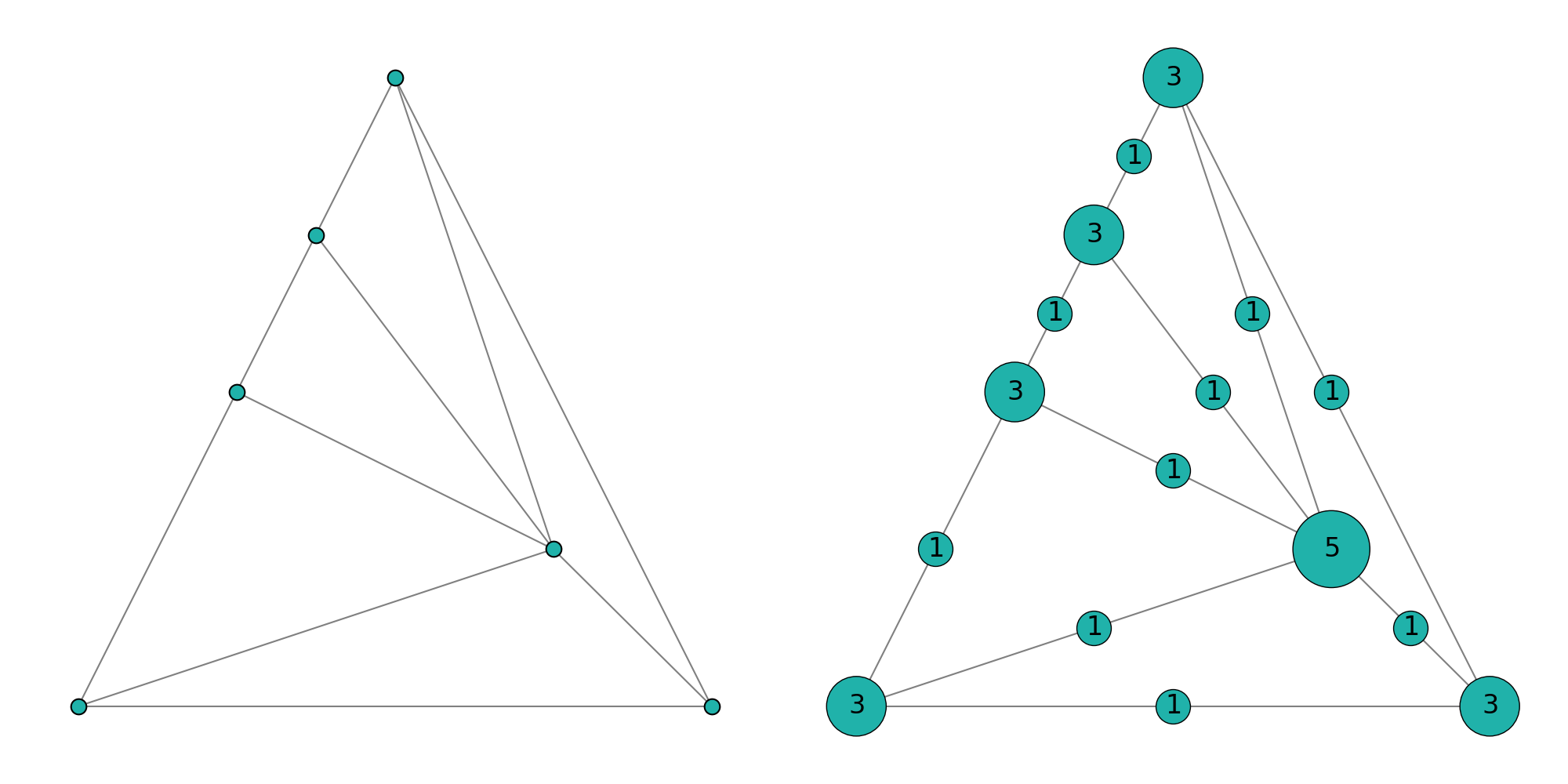}
    \caption{A graph $G$ (left) and the function $\tilde{f}$ on the augmented graph $\tilde{G}$, as defined in the proof of Theorem \ref{th:main2} (right).}
    \label{fig:main2_fig1}
\end{figure}
 
 \begin{proof}
 We prove by reducing the problem to the vertex cover problem which is known to be NP-complete. We will assume that the graph $G$ has girth greater than $4$. The vertex cover problem is NP-hard even with these restriction \cite{Mur92}. Given a graph $G$, we construct an auxiliary graph $\tilde{G}$ with an additional vertex at the midpoint of each edge of $G$ (see Figure \ref{fig:main2_fig1}). We then
 define the function $\tilde{f} : \tilde{G} \rightarrow [0,\infty)$ (see Figure \ref{fig:main2_fig1}) as follows: 
 \[
 \tilde{f}(v) = \begin{cases}
 \deg(v) & \ \text{ if $v \in G$}, \\
 1 & \ \text{ otherwise }
 \end{cases}
 \]
 We show that $G$ has a vertex cover with $k$ vertices if and only if $\tilde{f}$ has a strong unimodal decomposition consisting of $k$ components if and only if $\tilde{f}$ has a unimodal decomposition consisting of $k$ components in the following series of lemmas. This completes the reduction to vertex cover. Since the vertex cover problem is NP-hard on graphs with girth greater than $4$ \cite{Mur92}, determining minimal strong unimodal decomposition is also NP-hard.
\end{proof}

\begin{lem}
    If $G$ has a vertex cover with $k$ vertices, then $\tilde{f}$ has a unimodal decomposition consisting of $k$ components. If in addition, $G$ has girth greater than $4$, then $\tilde{f}$ has a strong unimodal decomposition consisting of $k$ components.
\end{lem}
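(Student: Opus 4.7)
My plan is to build an explicit decomposition indexed by the vertices of the cover. Let $C = \{v_1, \ldots, v_k\}$ be a vertex cover of $G$. For each $i$ I define an edge-linear function $f_i$ on $\tilde G$ by the vertex values $f_i(v_i) = \deg(v_i)$; for every edge $e = v_iw \in E(G)$ incident to $v_i$, $f_i(m_e) = 1$ and $f_i(w) = 1$ when $w \notin C$, and $f_i(m_e) = 1/2$ and $f_i(w) = 0$ when $w \in C$; all remaining vertex values are $0$. The intuition is that each cover vertex ``serves'' its incident edges, splitting them evenly with any adjacent cover vertex.

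The first step is to verify $\sum_i f_i = \tilde f$ by checking three types of points. At a cover vertex $v_i$ only $f_i$ is nonzero, giving $\deg(v_i) = \tilde f(v_i)$. At a non-cover vertex $w$, the vertex-cover hypothesis ensures every neighbor of $w$ is in $C$; each such neighbor $v_i$ contributes $1$, so the sum equals $\deg(w)$. At each midpoint $m_e$, either one or two endpoints of $e$ lie in $C$, contributing $1$ or $1/2 + 1/2 = 1$. The second step is to see each $f_i$ is unimodal: the induced subgraph on $\supp(f_i)$ is a star rooted at $v_i$ whose branches have length one (to midpoints of edges ending in $C$) or length two (to non-cover neighbors via midpoints); no two branches meet outside $v_i$, so the induced subgraph is a tree, and $f_i$ is maximized at $v_i$ and non-increasing along each branch, which yields unimodality via Theorem~\ref{thm:unimod-char}.

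For the strong decomposition under girth $>4$, I would classify overlaps of supports $T_i := \supp(f_i)$. For distinct $v_i, v_j$ the supports meet only in one of two mutually exclusive configurations: (A) if $v_iv_j \in E(G)$, then $T_i \cap T_j$ is the open subdivided arc from $v_i$ to $v_j$ in $\tilde G$; (B) if $v_i, v_j$ share a neighbor $w \notin C$, then $T_i \cap T_j$ is a V-shape with apex $w$. The girth $>4$ hypothesis rules out having (A) and (B) simultaneously on a pair (which would create a triangle) or having (B) twice on a pair (which would create a $4$-cycle), so each pairwise intersection is a single contractible piece. For any family of $m \geq 3$ indices, the same hypothesis forces the only nonempty simultaneous overlap to occur at a common non-cover neighbor $w$ of all the chosen $v_{i_\ell}$, producing an $m$-pronged star centered at $w$. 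Finally, each superlevel-set intersection $\bigcap_\ell \{f_{i_\ell} > c_\ell\}$ is a sub-interval or sub-star of one of these contractible pieces, because every $f_{i_\ell}$ is affine along each edge of $\tilde G$, so its superlevel set restricted to that edge is an interval.

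The main obstacle I anticipate is the strong-unimodality case analysis: ensuring no configuration involving a short cycle sneaks in. The girth $>4$ hypothesis is used in two precise ways (forbidding triangles to keep (A) and (B) from coinciding on a pair, and forbidding $4$-cycles to prevent two disjoint (B)-overlaps between the same pair), and once the overlap geometry is pinned down, contractibility of all intersections of superlevel sets is an immediate consequence of the edge-wise affine structure of the $f_i$.
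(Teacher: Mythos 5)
Your construction of the components $f_i$ (value $\deg(v_i)$ at the cover vertex, $1$ at non-cover neighbors and at midpoints of edges whose other endpoint is uncovered, $1/2$ at midpoints shared with another cover vertex) is exactly the paper's, and your verification of $\sum_i f_i = \tilde f$ and of unimodality via the subdivided-star support matches the paper's proof step for step. For strong unimodality the paper argues by contradiction --- two vertices in distinct components of an intersection of superlevel sets would force a $3$- or $4$-cycle --- while you classify the pairwise support overlaps directly, but both arguments invoke girth $>4$ in precisely the same way (ruling out a triangle or a $4$-cycle between two cover vertices), so this is essentially the same proof.
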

\begin{proof}
 Suppose $G$ has a vertex cover with $k$ vertices. For each vertex in the cover $v_i$, define a function $f_i$ on $\tilde{G}$ as follows:
 \[
 	f_i(v) = \begin{cases}
	\deg(v) & \ \text{ if $v = v_i$ }, \\
	1 & \ \text{ if $v$ is a neighbor of $v_i$ in $G$ and $v$ is not in the vertex cover },\\
	1 & \ \text{ if $v$ is the midpoint of edge $e = (v_i, v_j)$ such that $v_j$ is not in the vertex cover },\\
	\frac{1}{2} & \ \text{ if $v$ is the midpoint of edge $e = (v_i, v_j)$ such that $v_j$ is in the vertex cover }, \\
	0 & \ \text{otherwise}
	\end{cases}
 \]
 
 We show that $\sum_i f_i(v) = f(v)$ for all $v \in \tilde{G}$. For a vertex $v_i$ in the vertex cover, only $f_i(v_i)$ is non-zero and $f_i(v_i) = f(v_i)$. For a vertex $v$ not in the vertex cover, the other endpoint of all edges incident on $v$ must be a vertex in the cover, by definition of a vertex cover. The function $f_i$ corresponding to each of these vertices takes value $1$ at $v$, and their sum equals $f(v) = \deg(v)$. The remaining vertices in $\tilde{G}$ represent midpoints of edges. Let $e = (v_i,v_j)$ be such an edge. At least one endpoint of this edge (say $v_i$) must be in the vertex cover, in which case $f_i(e) = 1$, and if both $v_i$ and $v_j$ are in the cover then $f_i(e)+f_j(e) = 1$. 
 
We now show that each $f_i$ is unimodal. The support of $f_i$ is contained in the set of $v_i$, its neighbors in $G$, and the midpoint vertices of the edges between $v_i$ and neighbors. The subgraph in $\tilde{G}$ induces by these vertices is homeomorphic to the closed star of $v_i$ in $G$, which is contractible to $v_i$. Hence, the support of $f_i$ is a tree. In addition, since $\deg(v_i) \geq 1$, $f_i$ decreases away from its mode vertex $v_i$ along any edge away from it, showing that each $f_i$ is unimodal. This proves the first part of the lemma.
 
We now show that the collection of functions $\{ f_i\}_{i=1}^{k}$ form a strongly unimodal collection if the girth of $G$ is greater than $4$. Since each superlevel set of $f_i$ has to be a tree, the intersection of any superlevel sets of $f_i$ must be a forest (disjoint union of trees) and we just need to show that they have to be empty or connected. Let the intersection $\cap_{l=1}^{m} \{f_{i_l} \geq a_{i_l} \}$ have two disconnected components. Let $w_1$ and $w_2$ be vertices of $\tilde{G}$ from two distinct components. Since $w_1$ and $w_2$ have a unique path between them in each superlevel set $\{f_{i_l} \geq a_{i_l} \}$, we can assume that there are two indices $i_{l_1}$ and $i_{l_2}$ such that $w_1$ and $w_2$ belong to two different components of $\{f_{i_{l_1}} \geq a_{i_{l_1}} \} \cap \{f_{i_{l_2}} \geq a_{i_{l_2}} \}$. There are three possible cases:
\begin{itemize}
    \item $w_1$ and $w_2$ are midpoints of edges. If this is the case, both $w_1$ and $w_2$ should have endpoints at $v_{i_{l_1}}$ and $v_{i_{l_2}}$ which isn't possible, as we don't allow multiple edges in $G$.
    \item $w_1$ is a vertex of $G$ and $w_2$ is a midpoint of an edge. This means that $v_{i_{l_1}}$ and $v_{i_{l_2}}$ are endpoints of the edge $w_2$, and $w_1$ is a common neighbor of both these vertices in $G$. This implies the existence of a cycle of length $3$ in $G$.
    \item $w_1$ and $w_2$ are vertices of $G$. In this case, both $w_1$ and $w_2$ are neighbors of both $v_{i_{l_1}}$ and $v_{i_{l_2}}$ in $G$. This implies the existence of a cycle of length $4$ in $G$.
\end{itemize}
Hence, if $G$ has girth greater than $4$, then $\{f_i\}_{i=1}^{k}$ form a strongly unimodal decomposition.
\end{proof}

\begin{lem}
\label{lem:umd}
    If $\tilde{f}$ has a unimodal decomposition with $k$ components, then $G$ has a vertex cover with at most $k$ components.
\end{lem}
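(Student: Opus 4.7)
The plan is to extract a vertex cover of $G$ of size at most $k$ from the given unimodal decomposition $\{f_i\}_{i=1}^k$ of $\tilde{f}$. For each component I would associate a representative vertex $v_i^* \in V(G)$: if the mode $m_i$ of $f_i$ lies in $V(G)$, set $v_i^* := m_i$; otherwise $m_i$ is the midpoint of some edge $(a,b) \in E(G)$, and set $v_i^*$ to be one of $a$ or $b$ (the choice will be dictated by the coverage requirement below). In the degenerate case where $S_i = \{m_i\}$ consists of a single midpoint, again pick $v_i^* \in \{a,b\}$. The candidate vertex cover is $C := \{v_i^* : 1 \le i \le k\}$, of cardinality at most $k$.

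A structural fact that drives the argument is that $f_i(x) \le 1$ for every vertex $x$ of $\tilde{G}$ distinct from $m_i$. At a midpoint $e$ this follows directly from $\sum_j f_j(e) = \tilde{f}(e) = 1$ and non-negativity. At a vertex $x \in V(G) \cap S_i$ with $x \ne m_i$, the unique path in the tree $S_i$ (a subtree by Theorem~\ref{thm:unimod-char}) from $m_i$ to $x$ must traverse at least one midpoint $\mu$, so by non-increasingness of $f_i$ along this path, $f_i(x) \le f_i(\mu) \le 1$. Hence $f_i$ can exceed $1$ only at its own mode.

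To show $C$ is a vertex cover, suppose for contradiction that no choice of endpoints makes it one, so some edge $(u,w) \in E(G)$ with midpoint $e$ is forced to remain uncovered. Then (a) no $f_i$ has $m_i \in \{u, w\}$, and (b) no $f_i$ has its mode at a midpoint of an edge incident to $u$ or $w$, for otherwise the endpoint choice could place $u$ or $w$ in $C$. Since $\sum_j f_j(e) = 1 > 0$, some $i$ has $f_i(e) > 0$; then $e \in S_i$, and since $m_i \ne e$, the path in $S_i$ from $m_i$ to $e$ must enter $e$ via one of its only two neighbors in $\tilde{G}$, namely $u$ or $w$. WLOG $u \in S_i$ with $f_i(u) \ge f_i(e) > 0$.

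The contradiction is then extracted by a mass-flow analysis around $u$. Because no mode lies at $u$, the bound $f_j(u) \le 1$ holds for every $j$, so the mass constraint $\sum_j f_j(u) = \deg(u)$ forces at least $\deg(u)$ components to contain $u$ in their support. For each such component $j$, the path in $S_j$ from $m_j$ to $u$ enters $u$ via some midpoint $\mu_j$ adjacent to $u$, and by assumption (b) that midpoint is not itself a mode; non-increasingness gives $f_j(\mu_j) \ge f_j(u)$. Partitioning these $j$ across the $\deg(u)$ midpoints adjacent to $u$ and comparing the accumulated demand $\sum_j f_j(\mu_j)$ against the midpoint budgets $\sum_i f_i(\mu) = 1$, together with a mode-shifting step in the spirit of the commented-out sketch for the $k=2$ case, yields the desired contradiction. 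The principal obstacle is executing this accounting rigorously: one must verify that the mode-shifting step can be performed without breaking unimodality of any component or disturbing the identity $\sum_j f_j = \tilde{f}$, and that the resulting pigeonhole between demand and supply at the midpoints adjacent to $u$ is strict.
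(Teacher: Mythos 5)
Your setup is essentially the paper's: pick modes as cover candidates, and at a non-mode vertex $u$ exploit the budget $\tilde f(\mu)=1$ at each adjacent midpoint $\mu$ against the demand $\sum_j f_j(u)=\deg(u)$. But the step you defer as "the principal obstacle" is in fact the entire content of the proof, and the specific mechanism you hope for does not work. The single-vertex accounting at $u$ closes with \emph{equality}, not a strict inequality: grouping the components by the midpoint $\mu_j$ through which their support enters $u$ gives $\deg(u)=\sum_j f_j(u)\le\sum_{\mu\sim u}\sum_{j:\mu_j=\mu}f_j(\mu)\le\sum_{\mu\sim u}1=\deg(u)$, so there is no pigeonhole contradiction at $u$ alone. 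What the paper extracts from this forced equality is structural information: for each midpoint $e$ adjacent to $u$, the family $F^u_e$ of components entering through $e$ satisfies $\sum_{f\in F^u_e}f(u)=1$ and each such $f$ vanishes on every other midpoint adjacent to $u$. The contradiction then requires looking at \emph{both} endpoints of the uncovered edge simultaneously: if neither $u$ nor $w$ is a mode of any component, then $F^u_{e}=F^w_{e}$ for the midpoint $e$ of $(u,w)$, these functions are supported exactly on $\{u,e,w\}$ and sum to the constant $1$ there, so they can be merged into a single unimodal component having modes at both $u$ and $w$ --- reducing the number of components (or exhibiting a component that already has a mode at $u$), which is the contradiction. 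None of this is in your writeup; gesturing at "a mode-shifting step in the spirit of the commented-out sketch" does not establish it, and the verification that the merge preserves unimodality and the sum identity is exactly where the work lies.

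There is a second, logical gap in how you reduce to a single bad edge. From "no choice of endpoints for the midpoint-moded components yields a vertex cover" you cannot conclude that some fixed edge $(u,w)$ is uncovered under \emph{every} choice, which is what your properties (a) and (b) require; different choices may fail on different edges, so certifying infeasibility of the assignment is itself a global combinatorial claim you have not proved. The paper sidesteps this entirely by discarding components whose only modes are midpoints and proving directly that every edge of $G$ has an endpoint that is a mode of some component (after the merge modification). I would recommend dropping the endpoint-selection gadget and carrying out the $F^v_e$ analysis at both endpoints of a putative uncovered edge.
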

\begin{proof}
Now assume that $\tilde{f}$ has a unimodal decomposition with $k$ components $F = \{f_1, \ldots, f_k\}$. We pick at most one mode from each component to form a set of vertices in $G$. If the only mode of a component function is a vertex in $\tilde{G} - G$, we ignore this mode. If a component has multiple modes, with at least one mode in $G$, we pick any one of those modes in $G$. We will prove that the set of vertices in $G$ we obtain in this manner forms a vertex cover with at most $k$ elements. We will prove by contradiction. Let us assume that there are two vertices in $G$, $v_1$ and $v_2$ such that both the vertices are modes of none of the components. We will show that these two vertices can't be adjacent in $G$. Let $v$ be a vertex that is not the mode of any function in $F$, and $E_v = \{e_1, \ldots, e_{\deg(v)}\}$ be the edges hitting $v$ in $G$. Define $F^{v}_{e}$ to be the set of functions
\[
	F^{v}_{e} = \{ f_i \in F \ | f_i(e)  \geq f_i(v) \geq f_i(e^{'}) \ \forall e^{'} \neq e \}.
\]
The union of the collection $\{ F^{v}_{e} \}_{e \in E_v}$ should equal $F$, since $v$ is not the mode of any function in $F$. In addition, 
\[
	\sum_{f \in F^{v}_{e}} f(v) \leq \sum_{f \in F^{v}_{e}} f(e) \leq 1.
\]
which means 
\begin{equation}
\label{impsum1}
	\deg(v) = \sum_{f \in F} f(v) = \sum_{e \in E_v} \left( \sum_{f \in F^{v}_{e}} f(v) \right) \leq \sum_{e \in E_v} 1 = \deg(v).
\end{equation}
This implies that for each $e \in E_v$,
\[
	\sum_{f \in F^{v}_{e}} f(v) = 1.
\]
In addition, if $e_1 \neq e_2 \in E_v$ and $f \in F^{v}_{e_1}$ and $f(e_1) > 0$, then $f(e_2) = 0$. If $f(e_2) > 0$, there are two possibilities:
\begin{itemize}
	\item Either $f(e_2) <  f(e_1)$. In this case, $f \notin F^{v}_{e_2}$ but $f(e_2) = a > 0$, which means
	\[
		\sum_{g \in F^{v}_{e_2}} g(e_2) = 1 - \sum_{g \in F-F^{v}_{e_2}} g(e_2) \leq 1-f(e_2) < 1, \quad \text{since $f \notin F^{v}_{e_2}$,}
	\] 
	which further implies
	\[
		\deg(v) = \sum_{g \in F} g(v) \leq \sum_{e \in E_v} \left( \sum_{g \in F^{v}_{e}} g(v) \right)  < \deg(v)
	\]
	giving a contradiction.
	\item Or $f(e_2) = f(v) = f(e_1) > 0$. In this case $f \in F^v_{e_2} \cap F^v_{e_1}$, which means
	\[
		\deg(v) = \sum_{g \in F} g(v) \leq \sum_{e \in E_v} \left( \sum_{g \in F^{v}_{e}} g(v) \right)  - f(v) \leq \deg(v) - f(v) < \deg(v),
	\]
	where the first inequality follows from the fact that when $F$ is written as $\cup_{e \in E_v} F^{v}_{e}$, the function $f$ appears in both $F^v_{e_2}$ and $F^v_{e_1}$, which means it is counted twice and can be subtracted once from the sum on the right side of the inequality. This again gives a contradiction.
\end{itemize}
To summarize, the collection of functions $\{ F^{v}_{e} \}_{e \in E_v}$ satisfy the following two properties:
\begin{equation}
\label{prop1-mode-free}
\cup_{e \in E_v} F^{v}_{e} = F,
\end{equation}
\begin{equation}
\label{prop3-mode-free}
\sum_{f \in F^{v}_{e}} f(v) = 1,
\end{equation}
and
\begin{equation}
\label{prop2-mode-free}
f \in F^{v}_{e} \implies f(e^{'}) = 0 \ \text{ for all $e^{'} \in E_v - \{e\}$}.
\end{equation}

Now suppose $v_1$ and $v_2$ are vertices in $G$ with edge $e_{1,2}$ hitting both vertices. If both $v_1$ and $v_2$ are not the modes of any functions in $F$, then the set of functions $F^{v_1}_{e_{1,2}}$ = $F^{v_2}_{e_{1,2}} $. This is true as $f(e_{1,2}) \geq f(v_1)$ implies $f(e_{1,2}) \geq f(v_2)$. To see this, observe that if $f(v_2) > f(e_{1,2})$ then $f \in F^{v_2}_{e^{'}}$ for a different edge $e^{'}$ and by property \eqref{prop2-mode-free} $f(e_{1,2})$ would be zero. 
In addition, any function in the set $F^{v_1}_{e_{1,2}}$ must take value zero on any vertex in $\tilde{G}$ other than $v_1, e_{1,2}, v_2$, also due to property \eqref{prop2-mode-free} and unimodality of $f$. Now we can see that
\begin{equation}
\label{finalprop}
	\sum_{f \in F^{v_1}_{e_{1,2}}} f(v_1) = \sum_{f \in F^{v_1}_{e_{1,2}}} f(v_2) = \sum_{f \in F^{v_1}_{e_{1,2}}} f(e_{1,2}) = 1, \text{ \ and \ } \sum_{f \in F^{v_1}_{e_{1,2}}} f(v) = 0 \ \text{ for all other $v \in \tilde{G}$} 
\end{equation}
This means we can remove the set of functions $F^{v_1}_{e_{1,2}}$ from $F$ and replace it with a single function $\sum_{f \in F^{v_1}_{e_{1,2}}} f$, a  unimodal function as it takes the value $1$ on a path containing three vertices and value $0$ on all other vertices. This modification can be done in polynomial time, by scanning through each vertex in $\tilde{G}-G$ to check if it satisfies \eqref{finalprop}, which can take at most $O((E + V)^3)$ time. After this modification, both $v_1$ and $v_2$ are modes of the new function $\sum_{f \in F^{v_1}_{e_{1,2}}} f$, which is a contradiction.
\end{proof}

Notice that in the proof of Lemma \ref{lem:umd}, we only required $\tilde{f}$ to have $k$ unimodal components, not strongly unimodal components. If $\tilde{f}$ had a strongly unimodal decomposition with $k$ functions, it would automatically have a unimodal decomposition with $k$ functions as strong unimodal implies unimodal. We have essentially shown that the minimum vertex cover of $G$ is equal to $\ucat^1(\tilde{f})$ when $G$ is an arbitrary graph, and that $\ucat^1(\tilde{f}) = \ucat^1_s(\tilde{f})$ if $G$ has girth greater than $4$, proving Theorem \ref{th:main2}.

This also means that Theorem \ref{th:main2} is an alternate proof of the NP-hardness of determining $\ucat^p(f)$. However, Theorem \ref{th:main} shows the stronger result that even for any fixed $k \geq 2$, the decision problem $\ucat^p(f) \leq k$ is NP-hard. Since vertex cover is not NP-hard when $k$ is fixed, the same cannot be said about Theorem \ref{th:main2}. In fact, the tractability of determining $\ucat_s^p(f) \leq k$ for a fixed $k$ is quite different from that of $\ucat^p(f) \leq k$, at least for small values of $k$, as shown in the next theorem.

\begin{thm}
\label{th:diff-s}
For any $p \in \mathbb{N}\cup\{\infty\}$, the problem of determining whether $\ucat_s^p(f) \leq k$ is solvable in polynomial time if $k = 2$. 
\end{thm}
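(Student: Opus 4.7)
The plan is to show that the constraint $\ucat_s^p(f) \leq 2$ forces a sharp topological restriction on $\supp(f)$, and then to reduce the surviving case to the polynomial-time tree algorithm of Section~2.

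The first step will be to establish the necessary condition. If $f_1, f_2$ form a strongly unimodal decomposition in the sense required by $\ucat_s^p$, then $U_i := \supp(f_i)$ are open contractible subsets of the $1$-complex $G$ by Theorem~\ref{thm:unimod-char}, and $U_1 \cap U_2$ is open and either empty or contractible by the strong unimodality condition applied at $c_1 = c_2 = 0^+$. Regardless of whether $p = 1$, $1 < p < \infty$, or $p = \infty$, the identity $U_1 \cup U_2 = \supp(f)$ holds. Applying Mayer--Vietoris to the open cover $\{U_1, U_2\}$ of $\supp(f)$ and using $H_1(U_1) = H_1(U_2) = H_1(U_1 \cap U_2) = 0$, I conclude that $H_1(\supp(f)) = 0$. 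For a $1$-dimensional complex this means the induced subgraph on the vertex set of $\supp(f)$ is a forest with at most two tree components: one when $U_1 \cap U_2 \neq \emptyset$, two when $U_1 \cap U_2 = \emptyset$.

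The decision algorithm then handles the surviving cases. First I would compute the induced subgraph on $\supp(f)$ and its connected components; if any component contains a cycle, or if there are three or more components, output NO. If it consists of two disjoint tree components $S_1, S_2$, I would use the $k = 1$ case of Theorem~\ref{th:main} to test in linear time whether $f|_{S_i}$ is unimodal on each, and answer YES iff both pass; the decomposition $(f|_{S_1}, f|_{S_2})$ extended by zero has disjoint supports, so every pairwise superlevel-set intersection is empty and the decomposition is trivially strongly unimodal. If $\supp(f)$ is a single tree $T$, I would run the greedy tree algorithm of Section~2 on $(T, f|_T)$ (or simply count local maxima for $p = \infty$), which computes $\ucat^p(f|_T)$ in $O(|V|^2)$; since $\ucat_s^p = \ucat^p$ on trees and the greedy output is automatically strongly unimodal, we answer YES iff this value is at most $2$.

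Sufficiency is straightforward: any decomposition constructed above, extended by zero outside $\supp(f)$, remains unimodal on $G$ (the support still induces a subtree of $G$, so Theorem~\ref{thm:unimod-char} applies), and all pairwise superlevel-set intersections are contained in the original tree(s), where intersections of subtrees are themselves subtrees or empty. The main technical step is the Mayer--Vietoris obstruction that rules out any cycle in $\supp(f)$; once that is in hand, every remaining algorithmic step is a direct invocation of earlier results in the paper. A minor point to verify in the $p = \infty$ case is that $\supp(\max(f_1, f_2)) = \supp(f_1) \cup \supp(f_2)$, which is immediate from the definition of the maximum, so the topological argument goes through uniformly in $p$.
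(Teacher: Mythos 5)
Your proposal is correct and follows essentially the same route as the paper: both arguments observe that two contractible open supports with contractible-or-empty intersection cannot cover a cycle, forcing $\supp(f)$ to be a forest, and then fall back on the polynomial-time tree algorithm. The only difference is cosmetic --- you derive the obstruction by an explicit Mayer--Vietoris computation where the paper cites a result on minimal good covers, and you treat the two-component-support case more explicitly than the paper does.
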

\begin{proof}
    $\ucat_s^p(f) > 2$ for any cyclic graph, as the supports of the strongly unimodal components form a good cover of the graph $G$. The number of elements in a good cover of a topological space with one cycle is greater than 2 \cite{Kar17}.
    Hence, checking $\ucat_s^p(f) \leq 2$ is equivalent to checking if $G$ is a tree and then checking if $\ucat_s^p(f) \leq 2$ on the tree, which can be done in polynomial time.
\end{proof}

Since vertex cover is NP-hard even on planar graphs with degree at most 3 \cite{Mur92}, and the modifications we made in the above proofs preserve planarity and maximum degree, we get the following corollary.

\begin{cor}
\label{cor:cor1}
Let $G = (V,E)$ be a planar graph with degree at most 3 and $f : G \rightarrow [0,\infty)$ be an edge linear function on $G$. The problems of determining $\ucat^p(f)$ and $\ucat_s^p(f)$ are both NP-hard.
\end{cor}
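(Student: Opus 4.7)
The plan is to show that the reduction constructed in the proof of Theorem \ref{th:main2} preserves both planarity and the maximum-degree bound, so that NP-hardness of vertex cover on the restricted class of graphs transfers directly to NP-hardness of the unimodal decomposition problems on planar graphs of maximum degree $3$.

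First I would observe that the auxiliary graph $\tilde{G}$ is obtained from $G$ by subdividing each edge exactly once, i.e.\ inserting a degree-$2$ vertex at the midpoint of every edge. Subdivision is a classical planarity-preserving operation: a graph is planar if and only if every subdivision of it is planar, so $\tilde{G}$ is planar whenever $G$ is. Moreover, each newly added midpoint vertex has degree exactly $2$ in $\tilde{G}$, while each original vertex $v \in V$ retains the same set of incident edges and hence the same degree. Consequently, if the maximum degree of $G$ is at most $3$, so is the maximum degree of $\tilde{G}$.

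Next I would invoke the result cited as \cite{Mur92} that minimum vertex cover remains NP-hard when the input is restricted to planar graphs of maximum degree $3$ (and, for the strong variant, when additionally the girth exceeds $4$). Combined with the chain of equivalences established in the proof of Theorem \ref{th:main2} — namely, $G$ admits a vertex cover of size $k$ iff $\tilde{f}$ admits a unimodal decomposition of size $k$, and also iff $\tilde{f}$ admits a strongly unimodal decomposition of size $k$ when $G$ has girth greater than $4$ — this immediately yields the NP-hardness of computing $\ucat^1(\tilde{f})$ and $\ucat_s^1(\tilde{f})$ on the restricted class of graphs $\tilde{G}$. The extension to general $p \in \mathbb{N}$ follows from the remark that $\ucat^p(\tilde{f}) = \ucat^1(\tilde{f}^p)$ and $\ucat_s^p(\tilde{f}) = \ucat_s^1(\tilde{f}^p)$, since raising the edge-linear function to the $p$-th power at the vertices (and re-linearizing along edges, using the invariance noted in Section~1) does not alter the combinatorial structure of the underlying graph.

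The only potential obstacle is verifying that the required restriction of vertex cover — planar, max degree $3$, and (for the strong variant) girth greater than $4$ — is indeed covered by the cited NP-hardness results. If the exact combination of restrictions is not explicitly in \cite{Mur92}, a standard gadget argument (replacing each edge by a suitable short path of subdivision vertices, which increases girth without affecting planarity or the degree bound and changes the vertex cover number in a controlled way) would suffice. With that ingredient in hand, the corollary follows with no further computation.
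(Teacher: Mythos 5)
Your proof is correct and follows essentially the same route as the paper: the paper's justification is precisely that the midpoint-subdivision used in the proof of Theorem \ref{th:main2} preserves planarity and maximum degree, so NP-hardness of vertex cover on planar graphs of degree at most $3$ transfers directly. You are in fact slightly more careful than the paper in flagging that the strong variant additionally requires the girth condition to be compatible with the planar, degree-$3$ restriction, and your fallback via repeated edge subdivision (which changes the vertex cover number in a controlled way) is the standard fix.
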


It is also known that vertex cover is NP-hard to solve approximately within a factor less than $\sqrt{2}$ \cite{Din18}, and this factor can be improved to $2$ if the UGC is true \cite{khot2008}, which gives the corollary 
\begin{cor}
\label{cor:cor2}
For any $p \in \mathbb{N}$, it is NP-hard to approximate $\ucat^p(f)$ or $\ucat_s^p(f)$ upto a factor less than $\sqrt{2}$, and $2$ if UGC is true.
\end{cor}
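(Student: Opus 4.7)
The plan is to observe that the polynomial-time reduction constructed in the proof of Theorem \ref{th:main2} is exact-value-preserving rather than merely decision-preserving, so any inapproximability for Minimum Vertex Cover transfers directly to the unimodal categories. Specifically, combining the two lemmas used in that proof yields, for every graph $G$, the equality between $\ucat^1(\tilde{f})$ and the minimum vertex cover size of $G$, and also between $\ucat_s^1(\tilde{f})$ and the same quantity whenever $G$ has girth greater than $4$. Both directions of these equalities are constructive and polynomial-time, so any polynomial-time algorithm that approximates $\ucat^1(\tilde{f})$ (respectively $\ucat_s^1(\tilde{f})$) within multiplicative factor $\alpha$ automatically approximates the vertex cover number of $G$ within the same factor.

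Next I would invoke the known hardness of approximation for Minimum Vertex Cover: it is NP-hard to approximate the vertex cover number within any factor strictly less than $\sqrt{2}$ \cite{Din18}, and under the Unique Games Conjecture this threshold improves to $2$ \cite{khot2008}. Composing these results with the exact reduction described above yields the claimed inapproximability for $\ucat^1$, and, provided the hard vertex-cover instances can be chosen to have girth greater than $4$, for $\ucat_s^1$ as well. To extend the result from $p = 1$ to arbitrary $p \in \mathbb{N}$, I would use the identities $\ucat^p(f) = \ucat^1(f^p)$ and $\ucat_s^p(f) = \ucat_s^1(f^p)$ from the Remark: taking $f_p$ to be the edge-linear function on $\tilde{G}$ whose vertex values are $\deg(v)^{1/p}$ on the vertices of $G$ and $1$ on the midpoint vertices, the edge-reparameterization invariance of $\ucat$ recalled in the introduction gives $\ucat^p(f_p) = \ucat^1(\tilde{f})$ and likewise for the strong category, and the inapproximability obtained at the level $p = 1$ carries over verbatim.

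The only place where the argument requires a non-trivial check is ensuring that the Dinur-type $\sqrt{2}$ and UGC-based $2$ hard instances for Minimum Vertex Cover can be taken to have girth greater than $4$, which is needed in order to translate the bounds to $\ucat_s^p$. This can be handled by standard girth-boosting techniques in the PCP and UGC literature, so I expect it to be a routine but careful verification rather than a serious obstacle; for the plain (non-strong) category $\ucat^p$ no such constraint is needed and the composition is immediate.
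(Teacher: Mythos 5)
Your proposal follows essentially the same route as the paper: the reduction in Theorem~\ref{th:main2} is exact-value-preserving (minimum vertex cover of $G$ equals $\ucat^1(\tilde{f})$, and equals $\ucat_s^1(\tilde{f})$ when the girth exceeds $4$), so the $\sqrt{2}$ inapproximability of \cite{Din18} and the UGC-conditional factor $2$ of \cite{khot2008} transfer directly, with general $p$ handled via $\ucat^p(f)=\ucat^1(f^p)$. The girth caveat you flag for the strong category is a genuine subtlety that the paper's one-sentence proof does not address either, so on that point you are if anything more careful than the published argument.
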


\section{Minimal Unimodal Decomposition in Higher Dimensions}

In this section, we assume that $G$ is a simplicial complex of dimensional greater than 1 and $f : G \rightarrow [0,\infty)$ a piecewise linear function. The problem of determining minimal unimodal decompositions in this situation is much harder. In dimensions $d \geq 4$ the problem of determining whether a function is unimodal or not, a problem that can be solved in linear time on graphs, is not just NP-hard but undecidable.

\begin{thm}
Let $G$ be a simplicial complex of dimension $d \geq 4$ and  $f : G \rightarrow [0,\infty)$ a simplicial function. Then it is undecidable to determine whether $f$ is unimodal or not.
\end{thm}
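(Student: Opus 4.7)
The plan is to reduce from a classical undecidable problem: deciding contractibility of a finite $2$-dimensional simplicial complex. Given a finite presentation $\Gamma = \langle x_1, \dots, x_n \mid r_1, \dots, r_n \rangle$, which we may always assume balanced (same number of generators as relations, after padding by trivial generator--relation pairs that do not alter the group), the associated presentation $2$-complex $P_\Gamma$ has Euler characteristic $1$, $\pi_1(P_\Gamma) = \Gamma$, and $H_2(P_\Gamma) = \ker(\partial_2)$ a free abelian group. If $\Gamma$ is trivial, then $\pi_1 = H_1 = 0$, so $\chi(P_\Gamma) = 1 + \mathrm{rank}\,H_2 = 1$ forces $H_2 = 0$, and Hurewicz plus Whitehead give $P_\Gamma \simeq \mathrm{pt}$; if $\Gamma$ is nontrivial, $\pi_1 \neq 0$ already rules out contractibility. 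Thus $P_\Gamma$ is contractible iff $\Gamma$ is trivial, and by the Adian--Rabin theorem the latter is undecidable. Wedging $P_\Gamma$ with a standard $(d-1)$-simplex produces a $(d-1)$-dimensional simplicial complex $K$ with $K$ contractible iff $\Gamma$ is trivial.

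Given such $K$ with $\dim K = d - 1 \geq 3$, form the prism $G := K \times [0,1]$ with its standard triangulation: for each simplex $\sigma = [v_0, \dots, v_k]$ of $K$, subdivide $\sigma \times [0,1]$ into the $k+1$ simplices $[v_0^0, \dots, v_j^0, v_j^1, \dots, v_k^1]$ for $j = 0, \dots, k$, where $v_i^\epsilon := (v_i, \epsilon)$. Then $G$ is a simplicial complex of dimension exactly $d$. Define $f : G \to [0,\infty)$ by the vertex values $f(v_i^0) = 0$, $f(v_i^1) = 1$ extended simplicially; on any point $(x,t)$ this is simply $f(x,t) = t$.

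For $0 < c \leq 1$ the superlevel set $\{f \geq c\} = K \times [c,1]$ deformation retracts onto $K \times \{1\} \cong K$; for $c \leq 0$ the set equals $G$, still of the homotopy type of $K$; and for $c > 1$ it is empty. Therefore $f$ is unimodal iff every superlevel set is contractible or empty iff $K$ is contractible iff $\Gamma$ is trivial. A decision procedure for unimodality on $d$-dimensional simplicial complexes would thus decide triviality of balanced finite presentations, a contradiction. The only delicate input is the undecidability of contractibility for complexes of dimension $\geq 2$ (equivalently, Adian--Rabin); the reduction itself is a short topological verification, and extends to every $d \geq 4$ simply by choosing the wedge simplex in the appropriate dimension.
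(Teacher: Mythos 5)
Your reduction machinery is fine --- the prism $K\times[0,1]$ with $f(x,t)=t$ has every nonempty superlevel set homotopy equivalent to $K$, so $f$ is unimodal iff $K$ is contractible --- but the input to the reduction is where the argument breaks. You claim that any finite presentation ``may always be assumed balanced, after padding by trivial generator--relation pairs.'' Adding a generator $y$ together with the relation $y=1$ increases the number of generators \emph{and} the number of relations by one, so it preserves the difference between the two counts; it cannot turn an unbalanced presentation into a balanced one. (If there are more generators than relators the group has positive deficiency and is automatically nontrivial, so that direction is harmless; but the Adian--Rabin constructions produce presentations with many more relators than generators, and there is no Tietze move that removes relators or adds free generators without risking a change of group.) More fundamentally, the undecidability of the triviality problem \emph{restricted to balanced presentations} is not a known consequence of Adian--Rabin; it is essentially equivalent to the undecidability of contractibility for $2$-complexes, which the paper explicitly records as open (a $2$-complex is contractible iff $\pi_1=1$ and $H_2=0$, and $H_2$ is computable). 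Indeed, a useful sanity check: your argument, if correct, would prove the theorem for all $d\geq 2$ (take the presentation complex itself, or its prism), contradicting that open status.

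The paper's proof sidesteps all of this: it takes the constant function $\mathbbm{1}$ on $G$, observes that $\mathbbm{1}$ is unimodal iff $G$ is contractible, and invokes as a black box the known theorem that contractibility of simplicial complexes of dimension $\geq 4$ is undecidable \cite{Sti12, Tan16} (whose proof goes through superperfect groups and manifold constructions, not through balancing presentations). If you cite that result instead of rederiving it, your prism and the nonconstant $f(x,t)=t$ become unnecessary --- though they would still be a valid, slightly more elaborate, witness family.
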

\begin{proof}
Consider the constant function $\mathbbm{1}$ on $G$. $\mathbbm{1}$ is unimodal if and only if $G$ is contractible. The problem of determining whether a simplicial complex of dimension greater than 3 is contractible is undecidable \cite{Sti12, Tan16}.
\end{proof}

The undecidability of determining contractibility of simplicial complexes of dimension $2$ and $3$ is an open problem \cite{Tan16}. Nevertheless, the minimal unimodal decomposition problem is still NP-hard in dimensions $2$ and $3$, even in the simplest case of a simplicial approximation of the unit square in $\R^2$.

\begin{thm}
\label{thm:highd}
Let $G$ is a simplicial complex of dimension $2$ that is homeomorphic to $[0,1]\times[0,1] \subset \R^2$ and $f : G \rightarrow [0,\infty)$ a simplicial function. The problems of determining $\ucat^p(f)$ or $\ucat_s^p(f)$ are both NP-hard for any $p \in \mathbb{N}$.
\end{thm}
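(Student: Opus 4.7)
The plan is to reduce from Corollary \ref{cor:cor1}, which guarantees NP-hardness of computing $\ucat^p(f)$ and $\ucat_s^p(f)$ for edge-linear functions on planar graphs of maximum degree at most $3$. Given such a planar $G = (V,E)$ with edge-linear $f : G \to [0,\infty)$, I would first embed $G$ as a straight-line planar drawing in the interior of $[0,1]^2$; the drawing partitions the square into finitely many faces. In each face $F$ I place a new ``pit'' vertex $v_F$ and triangulate $F$ by coning from $v_F$ to the boundary of $F$. The resulting 2D simplicial complex $\tilde G$ is homeomorphic to $[0,1]^2$ and contains $G$ as a sub-1-complex. I define $\tilde f : \tilde G \to [0,\infty)$ by $\tilde f|_V = f|_V$, $\tilde f(v_F) = 0$ at every pit vertex, and linear extension over each triangle, so $\tilde f|_G = f$. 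The goal is the equality $\ucat^p(\tilde f) = \ucat^p(f)$ (and the same for $\ucat_s^p$), which together with Corollary \ref{cor:cor1} yields the theorem.

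The upper bound $\ucat^p(\tilde f) \leq \ucat^p(f)$ is the easy direction: extend each component $f_i$ of a minimal decomposition of $f$ on $G$ to $\tilde G$ by the same recipe (set the extension to $0$ at each pit and interpolate linearly). Inside each triangle $\triangle u v v_F$ the superlevel set $\{\tilde f_i \geq c\}$ for $c > 0$ is a convex polygon that stays away from $v_F$ and deformation retracts onto its intersection with the edge $uv$. Globally $\{\tilde f_i \geq c\}$ deformation retracts onto $\{f_i \geq c\}$, which is contractible by unimodality of $f_i$, so each $\tilde f_i$ is unimodal. The same deformation retract argument applied to finite intersections of superlevel sets promotes a strongly unimodal decomposition on $G$ to one on $\tilde G$.

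The hard part will be the reverse inequality $\ucat^p(\tilde f) \geq \ucat^p(f)$. Starting from a minimal unimodal decomposition $\sum_i \tilde f_i^p = \tilde f^p$ on $\tilde G$, I would set $f_i := \tilde f_i|_G$; then $\sum_i f_i^p = f^p$ on $G$, and the challenge is to prove each $f_i$ unimodal, i.e., $\{f_i \geq c\} = \{\tilde f_i \geq c\} \cap G$ is contractible for every $c > 0$. Let $S_c := \{\tilde f_i \geq c\}$, which is contractible in $\tilde G$ by hypothesis. Two features of the construction drive the argument. First, since $\tilde f_i^p \leq \tilde f^p$ pointwise, every pit vertex $v_F$ satisfies $\tilde f_i(v_F) = 0 < c$; by continuity $S_c$ avoids an open disk around each $v_F$. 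Second, inside each triangle $S_c$ is a convex polygon that deformation retracts to its intersection with the triangle's $G$-edge, so $S_c$ as a whole deformation retracts onto $S_c \cap G$; hence $S_c \cap G$ is connected. To rule out cycles, suppose $C \subset S_c \cap G$ is a cycle. Since $\tilde G \cong [0,1]^2$ is simply connected, $C$ bounds a closed disk in $\tilde G$ whose interior must contain at least one pit vertex $v_F$; since $v_F \notin S_c$ and $S_c$ avoids a neighborhood of $v_F$, the loop $C$ is nontrivial in $\pi_1(S_c)$, contradicting contractibility of $S_c$. Hence $S_c \cap G$ is a tree and $f_i$ is unimodal.

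The strongly unimodal case uses the identical argument with finite intersections $T := \bigcap_l \{\tilde f_{i_l} \geq c_l\}$ in place of $S_c$: each pit lies outside $T$, and a hypothetical cycle in $T \cap G$ would again force $T$ to encircle a puncture, contradicting contractibility. Since $\tilde G$ and $\tilde f$ can be computed in polynomial time from $(G,f)$, the reduction is complete. The single hardest technical hurdle I anticipate is the cycle argument in the lower bound, which requires a careful invocation of the Jordan curve theorem inside $\tilde G$ together with the observation that the pit vertices inside a bounding cycle genuinely obstruct any null-homotopy of $C$ in $S_c$.
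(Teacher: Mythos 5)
Your proposal is correct and follows essentially the same route as the paper: embed the planar instance from Corollary \ref{cor:cor1} in the square, triangulate so that every $2$-face contains at least one added vertex on which the function vanishes, and show that each superlevel set of the extended function deformation retracts onto the corresponding superlevel set in the graph, so the two decomposition problems (and their strong variants) coincide. The only real difference is in how that retraction is justified --- the paper invokes the standard fact that a superlevel set of a simplicial function retracts onto the full subcomplex spanned by $V_c$, which here contains no $2$-faces, whereas you build it by hand via radial projection from the pit vertices --- and your Jordan-curve step is redundant, since once $S_c$ deformation retracts onto $S_c \cap G$ the contractibility transfers directly.
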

\begin{proof}
Take a planar graph $\tilde{G} = (\tilde{V}, \tilde{E})$ and an edge linear function $\tilde{f}$ on $\tilde{G}$. We can embed $\tilde{G}$ in $[0,1]\times[0,1]$ and extend it to a simplicial approximation $G$ of $[0,1]\times[0,1]$ with vertices $V$ such that $\tilde{G}$ is a subcomplex of $G$. If $v_1, v_2, v_3$ are vertices in $\tilde{V}$ and $(v_1,v_2,v_3)$ is a 2-face in $G$, then we add a vertex $v$ to $V$ in the center of this face and add edges $(v_i, v)$. This means the face $(v_1,v_2,v_3)$ is replaced with three faces $(v,v_2,v_3), (v,v_1,v_2), (v,v_1,v_3)$ and this ensures that no face in $G$ has all three vertices coming from $\tilde{G}$. We can now extend $\tilde{f}$ to a simplicial function $f$ on $G$ by setting the value of $f$ to zero for all vertices outside $\tilde{G}$ in $G$, in $V-\tilde{V}$. We show that $\tilde{f}$ and $f$ have the same number of minimal unimodal components, from which the theorem follows.

$\{f \geq c\}$ is contractible if and only if the subcomplex $G(V_c)$ induced by the vertices $V_c = \{ v \in V \ | \ f(v) \geq c\}$ in $G$ is contractible, since $\{f \geq c\}$ is homotopy equivalent to the open star of $V_c$, 
which in turn is homotopic to $G(V_c)$. Since $f$ is zero on vertices outside $\tilde{V}$, we know that $V_c \subset \tilde{V}$. Also, no three vertices from $\tilde{V}$ form a 2-face in $G$, and so $G(V_c) \subset G(\tilde{V}) = \tilde{G} $ 
has to be a graph and $G(V_c) = \tilde{G}(V_c)$. This means that $\{f \geq c\}$ is contractible if and only if $\{ \tilde{f} \geq c \}$ is contractible, which means that $f$ is unimodal if and only if $\tilde{f}$ is unimodal. In addition, if we have finite intersections $\{ f_i \geq c_i \}_{i=1}^{k}$, a similar argument shows that any component of this intersection can be homotoped to the intersection $\{ \tilde{f_i} \geq c_i \}_{i=1}^{k}$ of the restriction to $\tilde{G}$.
This means that the restriction of any (strongly) unimodal 
decomposition of $f$ to $\tilde{G}$ is going to be a (strongly) unimodal decomposition of $\tilde{f}$, and any (strongly) unimodal decomposition of $\tilde{f}$ can be extended to a (strongly) unimodal decomposition of $f$ by setting the value of components to zero on vertices outside $\tilde{V}$, proving the theorem.

\end{proof}

\section{Conclusion and Future Work}
In this article, we have shown that the minimal unimodal decomposition problem, along with several closely related variants, is NP-hard on cyclic graphs and two(or more)-dimensional simplicial complexes. Several promising directions remain for future research:

\begin{enumerate}
\item \textit{Approximation Algorithms}: By reducing the problem to Vertex Cover, we established that computing $\ucat^p(f)$ within a factor better than 2 is NP-hard. Nevertheless, Vertex Cover admits a well known 2-approximation via a greedy algorithm. The greedy method proposed in \cite{Bar20} for computing $\ucat^p(f)$ on trees can be adapted to cyclic graphs, yielding a unimodal decomposition that is not necessarily minimal. Since the motivation for minimal decompositions stems from applications in topological statistics \cite{baryshnikov_unimodal_2011}, understanding the approximation quality of such algorithms would help assess their practical utility in statistical problems.

\item \textit{Fixed-Parameter Tractability}: While $\ucat^p(f)$ can be computed in polynomial time on trees, it remains open whether efficient algorithms exist for broader classes of graphs. In particular, it would be interesting to investigate whether $\ucat^p(f)$ can be computed in polynomial time on graphs that are nearly trees, such as graphs of bounded treewidth, potentially via dynamic programming techniques as in \cite{bodlaender88}. From a topological standpoint, it would also be interesting to explore whether bounded first Betti number ($\beta_1$) enables efficient computation. An $O(|V|^3)=O(|V|^{2+\beta_1}$ algorithm is already available in literature for graphs homeomorphic to the circle $S^1$ (see \cite{Gov21}). It would be interesting to see if the $O(|V|^{2+\beta_1})$ bound that holds for trees and cyclic graphs can be generalized to all graphs.

\end{enumerate}

\bibliographystyle{alpha}
\bibliography{references}
\end{document}